\DeclareMathAlphabet{\mathpzc}{OT1}{pzc}{m}{it}
\definecolor{darkgreen}{RGB}{0,95,0}
\numberwithin{equation}{section}
\def\eqnarray{\stepcounter{equation}\let\@currentlabel=\theequation
\global\@eqnswtrue
\tabskip\@centering\let\\=\@eqncr
$$\halign to \displaywidth\bgroup\hfil\global\@eqcnt\z@
  $\displaystyle\tabskip\z@{##}$&\global\@eqcnt\@ne
  \hfil$\displaystyle{{}##{}}$\hfil
  &\global\@eqcnt\tw@ $\displaystyle{##}$\hfil
  \tabskip\@centering&\llap{##}\tabskip\z@\cr}
\def\endeqnarray{\@@eqncr\egroup
      \global\advance\c@equation\m@ne$$\global\@ignoretrue}
\newtheorem{theorem}{Theorem}[section]
\newtheorem{corollary}[theorem]{Corollary}
\newtheorem{definition}[theorem]{Definition}
\newtheorem{lemma}[theorem]{Lemma}
\newtheorem{proposition}[theorem]{Proposition}
\newtheorem{assumption}[theorem]{Assumption}
\newtheorem{remark}[theorem]{Remark}
\numberwithin{equation}{section}
\def\RR{{\mathbb{R}}}
\def\Om{\Omega}
\def\om{\omega}
\def\om{\omega}
\def\ga{\gamma}
\def\pOm{\partial\Omega}
\begin{document}

\title[An Algorithm for Optimal Control of Fractional Problems with State Constraints]{Moreau-Yosida Regularization for Optimal Control of Fractional Elliptic Problems with State and Control Constraints}

\author{Harbir Antil}
\address{Department of Mathematical Sciences, George Mason University, Fairfax, VA 22030, USA.}
\email{hantil@gmu.edu}
\author{Thomas S. Brown}
\address{Department of Mathematical Sciences, George Mason University, Fairfax, VA 22030, USA.}
\email{tbrown62@gmu.edu}
\author{Deepanshu Verma}
\address{Department of Mathematical Sciences, George Mason University, Fairfax, VA 22030, USA.}
\email{dverma2@gmu.edu}

\thanks{This work is partially supported by 
the Air Force Office of Scientific Research under Award NO: FA9550-19-1-0036 and 
NSF grants DMS-1818772 and DMS-1913004. }

\date{\today}

\begin{abstract}
Recently in \cite{HAntil_DVerma_MWarma_2019b}, the authors have studied a state and control constrained optimal 
control problem with fractional elliptic PDE as constraints. The goal of this paper is to continue that program 
forward and introduce an algorithm to solve such optimal control problems. We shall employ the well-known Moreau-Yosida regularization to handle the state constraints. Similarly to the classical case, we establish the convergence (with rate) of the regularized control problem to the original one. We discretize the problem using a finite element method and establish convergence of our numerical scheme. We emphasize that due to the non-smooth nature of the fractional Laplacian, the proof for the classical case does not apply to the fractional case. The numerical experiments confirm all our theoretical findings. 
\end{abstract}

\keywords{Optimal control, fractional PDE, state and control constraints, Moreau-Yosida regularization, finite elements, convergence analysis}
\subjclass[2010]{49J20, 49K20, 35S15, 65R20, 65N30}

\maketitle

\section{Introduction} \label{Sec:1}

Let $\Om \subset \mathbb{R}^N$ ($N \ge 1$) be a bounded Lipschitz domain satisfying the exterior cone condition with boundary 
$\pOm$.  The main goal of this paper is develop a Moreau-Yosida regularization based algorithm to solve a state and control constrained optimal control problem.  For Banach spaces $U$ and $Z$, a desired state $u_d \in L^2(\Omega)$ and penalization parameter $\alpha >0$, the problem can be stated as 
\begin{subequations}\label{eq:dcp}
 \begin{equation}\label{eq:Jd}
    \min_{(u,z)\in (U,Z)} J(u,z):=\frac{1}{2}\|u-u_d\|_{L^2{(\Om)}}^2 + \frac{\alpha}{2} \|z\|^2_{L^2(\Om)} ,
 \end{equation}
 subject to the fractional elliptic PDE with $0<s<1$: Find $u \in U$ solving 
 \begin{equation}\label{eq:Sd}
 \begin{cases}
    (-\Delta)^s u &= z \quad \mbox{in } \Om, \\
                u &= 0 \quad \mbox{in } \RR^N\setminus \Om  ,
 \end{cases}                
 \end{equation} 
with the state constraints 
 \begin{equation}\label{eq:Ud}
    u|_{\Omega} \in \mathcal{K} := \left\{ w \in C_0(\Omega) \ : \ w(x) \le u_b(x) , 
        \quad \forall x \in \overline{\Om}  \right\} .
 \end{equation}
Here $C_0(\Om)$ is the space of continuous functions in $\overline\Om$ that vanish
on $\pOm$ and $u_b \in C(\overline\Om)$. By extending functions by zero outside $\Omega$, we can identify $C_0(\Omega)$ with the space $\{u \in C_c(\RR^N) \ : \ u = 0 \mbox{ in } \RR^N\setminus\Om \}$. 
In addition, we assume the control constraints
 \begin{equation}\label{eq:Zd}
    z \in Z_{ad} \subset 
    L^p(\Om), 
 \end{equation}
where $Z_{ad}$ is a non-empty, closed, and convex set and we require that the real number $p$ satisfies
 \end{subequations}
\begin{equation}\label{cond-p}
\begin{cases}
p>\frac{N}{2s}\;\;&\mbox{ if }\; N>2s,\\
p>1 \;\;&\mbox{ if }\; N=2s,\\
p=1\;\;&\mbox{ if }\; N<2s.
\end{cases}
\end{equation} 
Notice that the condition on $p$ in \eqref{cond-p} implies that the solution to the state
equation \eqref{eq:Sd} is in $L^\infty(\Om)$. In addition, the exterior cone condition on 
the domain $\Omega$ guarantees continuity of such solutions \cite[Theorem~3.4]{HAntil_DVerma_MWarma_2019b}. 
As a result, the Lagrange multiplier corresponding to the inequality constraint in \eqref{eq:Ud} 
is a Radon measure which is much more tractable than the dual space of $L^\infty(\Om)$.

Using first principle arguments and a constitutive relationship, the article \cite{CWeiss_BvBWaanders_HAntil_2018a} 
has recently derived a fractional Helmholtz equation. In addition, it shows a qualitative match 
between the real datum and numerical experiments. See also \cite{HAntil_CNRautenberg_2019b} for 
applications in imaging science. Motivated by such emerging applications, the optimal control of fractional 
PDEs with control constraints has recently received a tremendous amount of attention, we refer to 
\cite{antil2017optimal,MDElia_MGunzburger_2014a} and the references therein.  

However, the only existing work that provides a complete theoretical analysis for optimal control problem with state
constraints is \cite{HAntil_DVerma_MWarma_2019b}. This work has introduced several new theoretical tools
which requires a finer analysis than the classical case of $s=1$, for instance, characterization of the dual 
of the fractional order Sobolev spaces. This characterization has helped establish Sobolev regularity of 
fractional PDEs with measure valued datum, which is the case for the adjoint equation.

However, there are no existing approximation schemes and numerical algorithms to solve \eqref{eq:dcp}. 
As we have established in our previous work \cite{HAntil_DVerma_MWarma_2019b}, caution must be observed 
in claiming that one can apply existing techniques to solve the above problem. Indeed, 
in this paper we observe that the existing proofs, due to the nonlocal and non-smooth 
nature of fractional Laplacian, cannot be applied to show the convergence of numerical scheme.

The current 
paper aims to introduce a Moreau-Yosida regularization based solution algorithm for \eqref{eq:dcp} first 
in function spaces and then introduces a finite element discretization and discusses convergence of this
scheme. We emphasize that the use of Moreau-Yosida regularization within optimal control context is not new. There are many existing works on optimal control problems that use such a regularization, see for instance, the monograph \cite{KIto_KKunisch_2008a} and articles \cite{HiHi2009,hintermuller_kunisch}.  Other (incomplete) list of works that have applied such a regularization include shape optimizaiton (\cite{KeUl2015}), nonlinear or semilinear problems (\cite{TrVo2001,UlUlBr2017}), problems involving a nonlocal radiation condition (\cite{MeYo2009}), path following methods (\cite{HiScWo2014,KuWa2012}), see also for augmented Lagrangian techniques (\cite{KaStWa2018}).  

The remainder of the paper is organized as follows: In section~\ref{Sec:2}, we introduce 
some notation and preliminary results. In section~\ref{s:rocp}, we discuss the 
Moreau-Yosida regularized optimal control problem. Section~\ref{s:convergence}
is dedicated to the convergence (with rate) of this regularized problem to the original 
problem. In section~\ref{s:fem} we discuss the finite element discretization of the 
regularized optimal control problem and we establish the convergence of this fully discrete 
problem to the continuous one. At first, we show regularization parameter dependent
convergence and later (for a certain range of $s$), we show convergence of the scheme 
independent of this regularization parameter. Finally, in section~\ref{s:numerics}, 
we conclude with several numerical examples that confirm our theoretical findings.

\section{Notation and Preliminaries}
\label{Sec:2}
 
In this section we introduce some notation and review some preliminary results, see
also \cite{HAntil_DVerma_MWarma_2019b}. In all that follows, unless otherwise stated, we will 
take  $\Om$ as in the previous section,  $0 < s < 1$,  and $1 \le p < \infty$. We dedicate 
subsection~\ref{s:funspaces} to function spaces, this is followed by subsection~\ref{s:nsoln} which 
contains the notion of solution to the state equation \eqref{eq:Sd}. Finally, in Subsection~\ref{s:ocp} 
we discuss some results for the optimal control problem \eqref{eq:dcp}.

\subsection{Function spaces and fractional Laplacian}
\label{s:funspaces}
For a sufficiently regular $u$ defined on $\RR^N$, we let  
    \[
        D_{s,p} u[x,y] :=\frac{u(x)-u(y)}{|x-y|^{{\frac{N}{p}}+s}}
    \]
define a map on $\RR^N \times \RR^N$.  With this function, we define the Sobolev space  
    \[
        W^{s,p}(\Om) := \Big\{ u \in L^p(\Om) \ : \ D_{s,p} u \in 
                L^p(\Om \times \Om)  \Big\}, 
    \]
which we endow with the norm  
 \[
    \|u\|_{W^{s,p}(\Om)} := \left(\|u\|_{L^p(\Omega)}^p
        + \|D_{s,p} u\|_{L^p(\Om\times\Om)}^p \right)^{\frac1p}.      
 \] 
Using $\mathcal{D}(\Om)$ to denote the space of smooth functions with compact support in $\Omega$, we let
\begin{equation*}
 W_{0}^{s,p}(\Omega ):=\overline{\mathcal{D}(\Omega )}^{W^{s,p}(\Omega )}.
\end{equation*}
For a relation between $W^{s,p}_0(\Om)$ and classical $W^{s,p}(\Omega)$ space, we refer to 
\cite[Theorem~2.1]{HAntil_DVerma_MWarma_2019b}.

For the Dirichlet problem \eqref{eq:Sd} we also need to consider the Sobolev space 
    \[
        \widetilde{W}^{s,2}_0(\Om) := \left\{ u \in W^{s,2}(\RR^N) \;:\; u = 0 \mbox{ in } 
                \RR^N\setminus\Om \right\} . 
    \]    
In this case, for $0 < s < 1$, we have that
    \begin{align*}
        \|u\|_{\widetilde{W}_0^{s,2}(\Om)}:=\| D_{s,2}u\|_{L^2(\RR^N\times \RR^N)} 
    \end{align*}
defines a norm on $\widetilde{W}_0^{s,2}(\Om)$.     

    \begin{remark}\label{remark}
    {\rm
         From \cite[Remark~2.2]{HAntil_DVerma_MWarma_2019b} we recall that 
            if $p$ satisfies \eqref{cond-p}, then we have
            $\widetilde{W}_0^{s,2}(\Om)\hookrightarrow L^{p'}(\Omega)$, where $p':=\frac{p}{p-1}$.          
    }    
    \end{remark}
    
The following theorem contains an interesting result for $\widetilde{W}_0^{s,p}(\Om)$, 
see \cite[Theorem~2.3]{antil2018b} for a proof.     
\begin{theorem}
\label{thm:gris2}
Given a bounded open set $\Omega\subset\RR^N$ with a Lipschitz continuous boundary and $1<p<\infty$. 
If $\frac 1p<s<1$, then $\widetilde{W}_0^{s,p}(\Om)=W_0^{s,p}(\Omega)$ with equivalent norms. 
\end{theorem}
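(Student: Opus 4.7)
The plan is to establish the two set-theoretic inclusions $\widetilde{W}_0^{s,p}(\Om) \subseteq W_0^{s,p}(\Om)$ and $W_0^{s,p}(\Om) \subseteq \widetilde{W}_0^{s,p}(\Om)$ separately, and then extract the norm equivalence from the quantitative bounds produced along the way.

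For the first inclusion, I would approximate a given $u \in \widetilde{W}_0^{s,p}(\Om)$ by elements of $\mathcal{D}(\Om)$ via a translate-and-mollify procedure. Using a finite atlas of Lipschitz boundary charts and a subordinate partition of unity, I localize $u$ near $\pOm$, translate each localized piece slightly along the inward-pointing direction supplied by the Lipschitz graph representation, and then convolve with a standard mollifier of appropriately chosen scale. The strong continuity of translation on $W^{s,p}(\RR^N)$ and the standard mollifier estimates yield a sequence in $\mathcal{D}(\Om)$ converging to $u$ in $W^{s,p}(\RR^N)$, hence a fortiori to $u|_\Om$ in $W^{s,p}(\Om)$. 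This step does not use the restriction $s > 1/p$.

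The reverse inclusion via zero extension is where $s > 1/p$ is essential. For $u \in \mathcal{D}(\Om)$, writing $\tilde{u}$ for its extension by zero, a direct splitting of the Gagliardo double integral over $\RR^N \times \RR^N$ into the pieces $\Om \times \Om$, $\Om \times (\RR^N \setminus \Om)$ and its symmetric counterpart yields
\begin{equation*}
[\tilde{u}]_{W^{s,p}(\RR^N)}^p = [u]_{W^{s,p}(\Om)}^p + 2 \int_\Om |u(x)|^p \int_{\RR^N \setminus \Om} \frac{dy}{|x-y|^{N+sp}}\, dx.
\end{equation*}
Since $\pOm$ is Lipschitz, the inner integral is bounded by $C\, d(x,\pOm)^{-sp}$, so controlling the remaining term reduces to a fractional Hardy inequality of the form
\begin{equation*}
\int_\Om \frac{|u(x)|^p}{d(x,\pOm)^{sp}}\, dx \leq C\, \|u\|_{W^{s,p}(\Om)}^p,
\end{equation*}
which holds precisely when $s > 1/p$. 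I would prove this either by a Whitney-type decomposition combined with local flattening of the boundary, or by invoking known fractional Hardy inequalities. Density of $\mathcal{D}(\Om)$ in $W_0^{s,p}(\Om)$ then extends the estimate to all of $W_0^{s,p}(\Om)$, so zero extension is a bounded operator from $W_0^{s,p}(\Om)$ into $\widetilde{W}_0^{s,p}(\Om)$.

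The norm equivalence then follows: the Hardy-based bound gives $\|\tilde{u}\|_{W^{s,p}(\RR^N)} \leq C\,\|u\|_{W^{s,p}(\Om)}$, while the reverse bound $\|u\|_{W^{s,p}(\Om)} \leq \|\tilde{u}\|_{W^{s,p}(\RR^N)}$ is immediate from the definitions. The main obstacle is the fractional Hardy inequality itself: the threshold $s = 1/p$ is sharp (the inequality fails without a logarithmic correction), so any argument must exploit $s > 1/p$ in a genuine way, most naturally through the local integrability of $d(x,\pOm)^{-sp}$ against $|u|^p$ near the boundary.
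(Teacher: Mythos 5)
Your argument is correct, but note that the paper itself does not prove this theorem at all: it simply cites \cite[Theorem~2.3]{antil2018b} (a result going back to Grisvard), so there is no internal proof to compare against. Your route --- restriction into $W_0^{s,p}(\Omega)$ via localization, inward translation and mollification, and the reverse inclusion via the exact splitting of the Gagliardo seminorm of the zero extension together with the fractional Hardy inequality $\int_\Omega |u|^p\, d(x,\partial\Omega)^{-sp}\,dx \le C\|u\|_{W^{s,p}(\Omega)}^p$ on Lipschitz domains --- is precisely the standard proof underlying the cited reference, and the quantitative bounds you record do yield the norm equivalence. One small imprecision: the Hardy inequality does not hold ``precisely when $s>1/p$''; it fails only at the critical case $sp=1$ (and holds for all of $W^{s,p}(\Omega)$ when $sp<1$), but what you actually use --- its validity on $W_0^{s,p}(\Omega)$ for $sp>1$ --- is correct and is where the hypothesis $s>1/p$ genuinely enters.
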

In the case that $\frac{1}{p} < s < 1$, the previous result allows us to use the norm  
    \[
        \Vert u\Vert_{\widetilde{W}_{0}^{s,p}(\Om)}=\|D_{s,p} u\|_{L^p(\Om\times\Om)} . 
    \]
For $0<s<1$ and $p\in (1,\infty )$, the space $
\widetilde{W}^{-s,p^{\prime }}(\Om)$ is considered to be the dual of 
$\widetilde{W}_{0}^{s,p}(\Om )$, i.e., 
$ \widetilde{W}^{-s,p^{\prime }}(\Om ):=(\widetilde{W}_{0}^{s,p}(\Om ))^{\star }$, where as before we have $p^{\prime }:=\frac{p}{p-1}$.

In the sense of distributions, the {\bf fractional Laplacian} $(-\Delta )^{s}$  is defined by the formula
\[
(-\Delta )^{s}u(x)=C_{N,s}\mbox{P.V.}\int_{\RR^N}\frac{u(x)-u(y)}{|x-y|^{N+2s}}dy, 
\]
where the normalization constant $C_{N,s}$ is given by 
    \[
        C_{N,s}:=\frac{s2^{2s}\Gamma\left(\frac{2s+N}{2}\right)}{\pi^{\frac
        N2}\Gamma(1-s)},
    \]
with $\Gamma $ denoting the standard Euler Gamma function and P.V. denoting the Cauchy principal value of the integral about $x$ (see, e.g. \cite{Caf3,NPV,War}).

We also define the realization of the fractional Laplace operator $(-\Delta)^s$ in $L^2(\Om)$ 
that incorporates the Dirichlet exterior condition $u=0$ in $\RR^N\setminus\Omega$.  This operator is denoted as $(-\Delta)_D^s$, acts on functions $u$ in 
\[
D((-\Delta)_D^s):=\Big\{u|_{\Om},\; u\in \widetilde{W}_0^{s,2}(\Om):\; (-\Delta)^su\in L^2(\Omega)\Big\},
\]
and is given by 
\[
(-\Delta)_D^s(u|_{\Om})=(-\Delta)^su\;\mbox{ in }\;\Omega.
\]
Thus \eqref{eq:Sd} can now be rewritten as 
\begin{equation}\label{CP}
(-\Delta)_D^s u=z\quad \mbox{ in } \Om.
\end{equation}
Next, we recall the integration-by-parts formula for $(-\Delta)^s$ (see \cite{SDipierro_XRosOton_EValdinoci_2017a} for example). 
\begin{proposition}[The integration by parts formula for $(-\Delta)^s$]
If $u \in D((-\Delta)_D^s)$, then for every $v \in \widetilde{W}_0^{s,2}(\Om)$ we have
\begin{equation}\label{intbyParts}
\mathcal{E}(u,v) :=\frac{C_{N,s}}{2} 
        \hspace{-3pt} \int_{\RR^N}\int_{\RR^{N}} 
         \frac{(u(x)-u(y))(v(x)-v(y))}{|x-y|^{N+2s}} \;dxdy = \int_\Om v(-\Delta)^s u \; dx. 
\end{equation} 
\end{proposition}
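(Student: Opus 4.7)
The plan is to begin from the right-hand side and unfold the principal-value definition of $(-\Delta)^s$, then extract the symmetric bilinear form via a Fubini/renaming-of-variables trick. Since $v \in \widetilde{W}_0^{s,2}(\Om)$ is by convention extended by zero outside $\Om$, we have $\int_\Om v(-\Delta)^s u \, dx = \int_{\RR^N} v(x) (-\Delta)^s u(x) \, dx$, which is the natural form to work with.

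To avoid the singularity at $x = y$, I would first work with the truncated singular integral
\[
    (-\Delta)^s_\epsilon u(x) := C_{N,s}\int_{|x-y|>\epsilon} \frac{u(x)-u(y)}{|x-y|^{N+2s}}\,dy,
\]
for $\epsilon > 0$, so that the principal value is replaced by a genuine integral whose kernel is bounded away from the diagonal. For $u \in D((-\Delta)_D^s)$ one has the standard $L^2(\Om)$ convergence $(-\Delta)^s_\epsilon u \to (-\Delta)^s u$ as $\epsilon \to 0^+$, so that $\int_{\RR^N} v(-\Delta)^s_\epsilon u \, dx \to \int_\Om v(-\Delta)^s u \, dx$ by Cauchy--Schwarz in $L^2(\Om)$.

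For fixed $\epsilon$ the integrand is absolutely integrable in $(x,y)$, so Fubini applies. Splitting $u(x) - u(y)$, renaming $x \leftrightarrow y$ in the second half, and using that the kernel $|x-y|^{-(N+2s)}$ is symmetric yields
\[
\int_{\RR^N}\! v(x)(-\Delta)^s_\epsilon u(x)\, dx
= \frac{C_{N,s}}{2}\!\iint_{|x-y|>\epsilon} \!\frac{(u(x)-u(y))(v(x)-v(y))}{|x-y|^{N+2s}}\, dx\, dy.
\]
To conclude, I would pass to the limit $\epsilon \to 0^+$ on the right-hand side by dominated convergence: the integrand on $\RR^N\times\RR^N$ is bounded in modulus by $|D_{s,2}u[x,y]|\cdot|D_{s,2}v[x,y]|$, which is integrable by Cauchy--Schwarz since both $u$ and $v$ belong to $\widetilde{W}_0^{s,2}(\Om) \hookrightarrow W^{s,2}(\RR^N)$. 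This identifies the limit with $\mathcal{E}(u,v)$ and yields the identity.

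The main obstacle is the rigorous $L^2$-convergence $(-\Delta)^s_\epsilon u \to (-\Delta)^s u$ for $u$ in the domain $D((-\Delta)_D^s)$, since pointwise convergence of the principal value is not enough. I would either invoke it directly from \cite{SDipierro_XRosOton_EValdinoci_2017a}, or prove it by first establishing the identity on the dense subspace $\mathcal{D}(\Om)$ (where the P.V.\ integral is a genuine limit of classical integrals and the Fubini step is entirely elementary), and then extending to all of $D((-\Delta)_D^s)\times \widetilde{W}_0^{s,2}(\Om)$ by continuity of both sides: the bilinear form $\mathcal{E}(\cdot,\cdot)$ is bounded on $\widetilde{W}_0^{s,2}(\Om)$, and the right-hand side is bounded by $\|(-\Delta)^s u\|_{L^2(\Om)} \|v\|_{L^2(\Om)}$.
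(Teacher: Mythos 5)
The paper itself does not prove this proposition; it simply recalls it from the cited reference, so your attempt has to stand on its own. Its skeleton — truncate the principal value at scale $\epsilon$, apply Fubini, symmetrize in $x\leftrightarrow y$, and pass to the limit using that $D_{s,2}u\cdot D_{s,2}v\in L^1(\RR^N\times\RR^N)$ — is exactly the standard route, and the part of your argument that extends in the variable $v$ is sound, since both sides are continuous in $v$ on $\widetilde{W}^{s,2}_0(\Om)$ and $\mathcal{D}(\Om)$ is dense there. The trouble is entirely in how you handle a rough $u\in D((-\Delta)_D^s)$, and there you have a genuine gap, not just a missing citation.

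First, the ``standard $L^2(\Om)$ convergence'' $(-\Delta)^s_\epsilon u\to(-\Delta)^s u$ for $u$ merely in the operator domain is not standard: pointwise existence of the principal value together with membership of the limit in $L^2$ gives neither $L^2$ nor weak-$L^2$ convergence of the truncations without a uniform bound or a dominating function, and the hypotheses $u\in\widetilde{W}^{s,2}_0(\Om)$, $(-\Delta)^su\in L^2(\Om)$ supply no such bound; this passage is essentially the whole content of the proposition. Second, your fallback density argument fails as stated: you would need $\mathcal{D}(\Om)$ to be dense in $D((-\Delta)_D^s)$ for the \emph{graph} norm, because the left-hand side is controlled by $\|u\|_{\widetilde{W}^{s,2}_0(\Om)}$ while the right-hand side is controlled by $\|(-\Delta)^su\|_{L^2(\Om)}$, so the approximants must converge in both. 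A form core is not an operator core, and for Dirichlet realizations it genuinely is not one: already for $s=1$ the graph closure of $C_c^\infty(\Om)$ is $H^2_0(\Om)$, strictly smaller than $H^2(\Om)\cap H^1_0(\Om)$ (the first eigenfunction has nonvanishing normal derivative), and the same obstruction persists for $0<s<1$, where functions in the Dirichlet domain behave like $\mathrm{dist}(x,\partial\Om)^s$. The repair is to keep the singular integral on the smooth factor: prove first that $\mathcal{E}(u,\varphi)=\int_{\RR^N}u\,(-\Delta)^s\varphi\,dx$ for all $u\in\widetilde{W}^{s,2}_0(\Om)$ and $\varphi\in\mathcal{D}(\Om)$ (here the truncation converges uniformly because $\varphi$ is $C^2$ with compact support), then use the hypothesis $(-\Delta)^su\in L^2(\Om)$ to replace $\int_{\RR^N}u\,(-\Delta)^s\varphi$ by $\int_\Om\varphi\,(-\Delta)^su$ — immediate if that membership is understood distributionally, and precisely the point to quote from the literature if it is understood via the pointwise principal value — and finally pass from $\varphi$ to general $v\in\widetilde{W}^{s,2}_0(\Om)$ by density in $v$ alone, which is the step you already justified.
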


\subsection{Notions of solution to the state equation}
\label{s:nsoln}

Next we state the notion of weak solution to \eqref{CP}.  In what follows, as well as subsequently throughout, we will use $\langle \cdot, \cdot \rangle_{X^\star, X}$ to represent the duality pairing of elements of a Banach space $X$ and its dual $X^\star$.  
 \begin{definition}[\bf Weak solution] 
 \label{def:weak_d}
    Let $z \in\widetilde{W}^{-s,2}(\Om)$. 
    A $u \in \widetilde{W}^{s,2}_0(\Om)$ 
    is said to be a weak solution to \eqref{eq:Sd} if 
    \[
     \mathcal{E}(u,v)
         = \langle z , v \rangle_{\widetilde{W}^{-s,2}(\Om),\widetilde{W}^{s,2}_0(\Om)}  \qquad \forall v\in \widetilde{W}^{s,2}_0(\Om). 
    \]
 \end{definition}

We finish this section by defining a very-weak solution to \eqref{CP}.  To this end, we introduce the notation for the dual space $C_0({\Om})^\star = \mathcal{M}({\Om})$ 
	where $\mathcal{M}({\Om})$ denotes the space of all Radon measures on ${\Om}$ such that
    \[
        \langle \mu , v \rangle_{\mathcal{M}({\Om}),C_0({\Om})} 
        = \int_{{\Om}} v\;d\mu , 
        \quad \mu \in \mathcal{M}({\Om}), \quad v \in C_0({\Om}),
    \]
and we have the norm
       $ \|\mu\|_{\mathcal{M}({\Om})} = \sup_{v \in C_0({\Om}), \; |v|\le 1} \int_{{\Om}} v\;d\mu $. 

%
\begin{definition}[very-weak solutions]
    \label{def:vwsoln}
        Let $p$ be as in \eqref{cond-p}, $p' = \frac{p}{p-1}$, and $\mu \in \mathcal{M}({\Om})$. A function $u \in L^{p'}(\Om)$ is said to be a very-weak solution
        to \eqref{eq:Sd} if the identity 
        \[
            \int_\Om u (-\Delta)^s v\;dx 
             = \int_\Om v \; d\mu,
        \]
        holds for every $v \in \Big\{ C_0({\Om}) \cap \widetilde{W}^{s,2}_0(\Om) \; : \;
         (-\Delta)^s v \in L^p(\Om) \Big\}$. 
\end{definition}

Existence of a unique weak solution according to the Definition~\ref{def:weak_d} is due to classical 
Lax-Milgram Theorem. For the existence and uniqueness of a very-weak solution, according to the 
Definition~\ref{def:vwsoln}, we refer to \cite[Theorem~3.6]{HAntil_DVerma_MWarma_2019b}. In addition,
\cite{HAntil_DVerma_MWarma_2019b} establishes continuity of solution to the state equation \eqref{eq:Sd}. 
Moreover, it provides well-posedness of fractional PDEs with measure valued datum. The latter is 
essential to establish a Sobolev regularity of the adjoint equation.

\subsection{Optimal Control Problem}
\label{s:ocp}

As seen in section~\ref{s:funspaces}, $(-\Delta)^s_D$ is the realization in $L^2(\Om)$
of the fractional Laplacian $(-\Delta)^s$ that incorporates zero exterior Dirichlet
condition.  From the integration-by-parts formula \eqref{intbyParts}, we can see that it is a self-adjoint operator on $L^2(\Om)$.  We introduce the relevant function spaces
    \begin{equation}\label{eq:funcZU}
    \begin{aligned}
        Z &:= L^p(\Om),  \quad \mbox{with $p$ as in \eqref{cond-p} but } 2 \le p < \infty , \\
        U &:= 
            \{u\in \widetilde{W}_0^{s,2}(\Om)\cap C_0(\Om): (-\Delta)^s_D (u|_\Om) \in L^{p}(\Om)\}.
    \end{aligned}    
    \end{equation}
Notice that previously in the paper we considered $1 \le p$. However, in the definition of $Z$ in \eqref{eq:funcZU} we have taken $2 \le p$. The reason for this  change is that our objective function in \eqref{eq:Jd} contains the $L^2$-regularization on the control $z$ with $\alpha > 0$. Therefore, our controls are at least $L^2$-regular. 
    
The problem \eqref{eq:dcp} can be rewritten as
	\begin{equation}\label{eq:Sdd} 
	\begin{aligned}
		&\min_{(u,z)\in (U,Z)} J(u,z) \\ 
		\text{subject to}& \\
		& (-\Delta)_D^s u = z , \quad \mbox{in } \Om \\
		&u|_{\Om} \in \mathcal{K} \quad \mbox{and} \quad 
		z \in Z_{ad}  .
	\end{aligned}	
	\end{equation}

Notice that for every $z \in Z$, due to \cite[Theorem~3.4]{HAntil_DVerma_MWarma_2019b}, there is a 
unique $u \in U$ that solves the state equation \eqref{eq:Sd}. Using this fact, 
the control-to-state (solution) map
    \[
        S : Z \rightarrow U,\;\;\;z \mapsto Sz =: u 
    \]       
is well-defined, linear, and continuous. Since $U$ is continuously embedded into
$C_0(\Om)$, we can consider the control-to-state map as 
    \[
        \boldsymbol{S} \coloneqq E \circ S : Z \rightarrow  C_0(\Om), 
    \]
where $E$ is the appropriate embedding operator. This allows us to define the admissible control set as
    \[
        \widehat{Z}_{ad} := \left\{ z \in Z \; : \; 
            z \in Z_{ad}, \ \boldsymbol{S} z \in \mathcal{K} \right\} ,
    \]
and as a result, the reduced minimization problem is given by 
     \begin{equation}\label{eq:rpDir}
        \min_{z \in \widehat{Z}_{ad}} \mathcal{J}(z) := J( \boldsymbol{S} z,z)=\frac{1}{2}\|\boldsymbol{S}z-u_d\|_{L^2{(\Om)}}^2 + \frac{\alpha}{2} \|z\|^2_{L^2(\Om)} . 
    \end{equation}
    
Existence and uniqueness of $z \in \widehat{Z}_{ad}$ that solves \eqref{eq:rpDir} is discussed in 
\cite[Theorem~4.1]{HAntil_DVerma_MWarma_2019b}. Next, we will state the first 
order necessary and sufficient optimality conditions after making the following assumption:
    \begin{assumption}[\bf Slater condition]\label{ass:data_compatibility}
        There is some control function $\widehat{z} \in Z_{ad}$ 
        such that the corresponding state $u$ solving the state equation
        \eqref{eq:Sd} fulfills the strict state constraint 
    \[
        u(x) < u_b(x) \quad \forall x \in \overline\Omega . 
    \]
    \end{assumption}   
Given the optimal solution $(\bar{u},\bar{z})$ to \eqref{eq:Sdd}, from \cite[Theorem~4.3]{HAntil_DVerma_MWarma_2019b} we know that there exists a measure  $\bar{\mu} \in \mathcal{M}(\Om)$ and an adjoint variable $\bar{\xi} \in L^{p'}(\Om)$ satisfying 
\begin{subequations}
	\begin{align}
		&(-\Delta)^s_D \bar{u} = \bar{z}   \quad && \mbox{in } \Om , \label{eq:a}  \\             
		&\langle \bar{\xi},(-\Delta)^s_D v \rangle_{L^{p'}(\Om),L^{p}(\Om)} 
        = \left( \bar{u}-u_d,v \right)_{L^2(\Om)} + \int_\Om v \; d\bar{\mu}    
         && \forall \;v \in U, \label{eq:b} \\
        &\langle \bar{\xi} + \alpha \bar{z} , 
        z - \bar{z} \rangle_{L^{p'}(\Om),L^{p}(\Om)} 
         \ge 0  && \forall \;z \in Z_{ad},  \label{eq:c}  \\
        &\bar{\mu} \ge 0, \quad  \bar{u}(x) \le u_b(x) \mbox{ in } \Om, 
        \quad \mbox{and} \quad \int_{\Om} (u_b - \bar{u})\;d\bar\mu = 0 \label{eq:d}.      
	\end{align}
\end{subequations}

We emphasize that it is possible to establish a Sobolev regularity of the adjoint 
variable $\bar{\xi}$, we refer to \cite[Corollary~6.6]{HAntil_DVerma_MWarma_2019b} for 
the details.
For notational convenience, from hereon, we will assume that $u_b\equiv0$. However,
the entire discussion, under minor modifications, holds without this assumption.

\section{Moreau-Yosida Regularized Optimal Control Problem}
\label{s:rocp}

The purpose of this section is to introduce a regularized optimal control problem. In particular, we use the well-known Moreau-Yosida regularization. The resulting regularized optimal control problem is given by
\begin{subequations}\label{eq:regdcp}
	\begin{equation}\label{eq:regJd}
	\min J^\ga(u,z):=J(u,z)+ \frac{1}{2 \ga} \|(\hat{\mu}+\ga u)_+\|^2_{L^2(\Om)},
	\end{equation}
	subject to the fractional elliptic PDE: Find $u \in U$ solving 
	\begin{equation}
	\begin{cases}
	(-\Delta)^s u &= z \quad \mbox{in } \Om, \\
	u &= 0 \quad \mbox{in } \RR^N\setminus \Om  ,
	\end{cases}                
	\end{equation} 
	with 
\begin{equation}\label{eq:regZd}
z \in Z_{ad}, 
\end{equation}
\end{subequations}
where $\ga >0$ denotes the regularization parameter and $0\leq \hat{\mu}\in L^2(\Om)$ is 
the realization of the Lagrange multiplier $\bar{\mu}$, see \cite{KIto_KKunisch_2008a} 
for a discussion on this matter.

Again using the control-to-state mapping, \eqref{eq:regdcp} can be reduced to (for 
notation simplicity, we again use $\boldsymbol{S}$ to denote the control-to-state map)
\begin{equation}\label{eq:regrpDir}
	\min_{z \in Z_{ad}} \mathcal{J}^{\ga}(z) :=\mathcal{J}(z)+\frac{1}{2 \ga} \|(\hat{\mu}+\ga \boldsymbol{S}z)_+\|^2_{L^2(\Om)} . 
\end{equation}
Existence and uniqueness of solution to \eqref{eq:regdcp} can be done using the standard
direct method, see for instance \cite[Theorem~4.1]{HAntil_DVerma_MWarma_2019b}. 

Moreover, we have the following first order necessary and sufficient optimality conditions
whose proof is similar to \cite[Theorem~4.2]{HAntil_DVerma_MWarma_2019b} and has been omitted for brevity.  
\begin{theorem}\label{thm:optcond}
	Let $J^\ga : L^2(\Om) \times Z \rightarrow \mathbb{R}$ be continuously Fr\'echet differentiable, and $(\bar{u}^\ga,\bar{z}^\ga) \in \widetilde{W}^{s,2}_0(\Om)\times Z_{ad}$ be a solution to the optimization
	problem \eqref{eq:regdcp}. Then there exists a Lagrange multiplier (adjoint variable) $\bar{\xi}^\ga \in 
	\widetilde{W}^{s,2}_0(\Om)$ 
	such that
	\begin{subequations}
		\begin{align}
		&\mathcal{E} (\bar{u}^\ga,v) = (\bar{z}^\ga, v)_{L^2(\Om)}  \quad && \forall \;v \in \widetilde{W}^{s,2}_0(\Om) , \label{eq:rega}  \\             
		&\mathcal{E}(\bar{\xi}^\ga,v) 
		= (\bar{u}^\ga-u_d, v)_{L^2(\Om)} + ((\hat{\mu}+\ga \bar{u}^\ga)_+,  v )_{L^2(\Om)}     
		  &&\forall \;v \in \widetilde{W}^{s,2}_0(\Om), \label{eq:regb} \\
		&(\bar{\xi}^\ga + \alpha \bar{z}^\ga, z-\bar{z}^\ga)_{L^2(\Om)}
		\ge 0  \quad && \forall \;z \in Z_{ad}  \label{eq:regc}.  
		\end{align}
	\end{subequations} 
\end{theorem}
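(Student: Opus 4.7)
The plan is to adapt the standard Lagrangian/adjoint argument used for \cite[Theorem~4.2]{HAntil_DVerma_MWarma_2019b}, exploiting the fact that the Moreau--Yosida penalty has regularized the state constraint so that all multipliers now live in $L^2(\Omega)$ rather than in $\mathcal{M}(\Omega)$. I would proceed in three steps: (i) differentiate the reduced cost, (ii) write out the first-order necessary condition in primal form, and (iii) introduce the adjoint to eliminate the control-to-state map.

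First, I would verify that $J^\gamma$ is continuously Fr\'echet differentiable on $L^2(\Omega)\times Z$. The only non-classical term is the penalty $\tfrac{1}{2\gamma}\|(\hat\mu+\gamma u)_+\|^2_{L^2(\Omega)}$. Since $t\mapsto \tfrac12(t_+)^2$ is $C^1$ on $\mathbb{R}$ with $1$-Lipschitz derivative $t\mapsto t_+$, a standard superposition (Nemytskii) argument shows that $u\mapsto \tfrac{1}{2\gamma}\|(\hat\mu+\gamma u)_+\|^2_{L^2(\Omega)}$ is $C^1$ from $L^2(\Omega)$ to $\mathbb R$ with derivative $v\mapsto ((\hat\mu+\gamma u)_+,v)_{L^2(\Omega)}$; here we use $\hat\mu\in L^2(\Omega)$. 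Combined with the classical differentiability of the tracking and regularization terms, this gives continuous differentiability of $J^\gamma$.

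Second, I would pass to the reduced problem \eqref{eq:regrpDir}. Because $\boldsymbol{S}:Z\to L^2(\Omega)$ is linear and continuous, $\mathcal{J}^\gamma$ is continuously differentiable on $Z$, and the necessary optimality condition for the convex-constrained problem $\min_{z\in Z_{ad}}\mathcal{J}^\gamma(z)$ reads
\[
\big(\boldsymbol{S}\bar z^\gamma-u_d+(\hat\mu+\gamma \boldsymbol{S}\bar z^\gamma)_+,\;\boldsymbol{S}(z-\bar z^\gamma)\big)_{L^2(\Omega)}+\alpha(\bar z^\gamma,z-\bar z^\gamma)_{L^2(\Omega)}\ge 0 \quad\forall\,z\in Z_{ad}.
\]
The state equation \eqref{eq:rega} holds by construction of $\boldsymbol{S}$.

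Third, to convert this into \eqref{eq:regc} I would introduce $\bar\xi^\gamma\in\widetilde{W}^{s,2}_0(\Omega)$ as the unique weak solution of \eqref{eq:regb}, which exists by the Lax--Milgram theorem applied to the coercive symmetric form $\mathcal{E}$, since the right-hand side $(\bar u^\gamma-u_d)+(\hat\mu+\gamma \bar u^\gamma)_+$ belongs to $L^2(\Omega)\hookrightarrow \widetilde{W}^{-s,2}(\Omega)$ (Remark~\ref{remark}). Testing \eqref{eq:regb} with $v=\boldsymbol{S}(z-\bar z^\gamma)\in\widetilde{W}^{s,2}_0(\Omega)$ and using the symmetry of $\mathcal{E}$ together with the state-equation identity $\mathcal{E}(\boldsymbol{S}(z-\bar z^\gamma),\bar\xi^\gamma)=(z-\bar z^\gamma,\bar\xi^\gamma)_{L^2(\Omega)}$ collapses the first term of the displayed inequality to $(\bar\xi^\gamma,z-\bar z^\gamma)_{L^2(\Omega)}$, yielding \eqref{eq:regc}. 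The main obstacle I anticipate is making the $L^2$-differentiability of the penalty genuinely rigorous; once that is in hand, the argument is structurally parallel to \cite[Theorem~4.2]{HAntil_DVerma_MWarma_2019b} and sufficiency follows from convexity of $\mathcal{J}^\gamma$ on the closed convex set $Z_{ad}$.
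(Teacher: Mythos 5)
Your proposal is correct and follows exactly the argument the paper intends: the paper omits the proof, stating it is analogous to \cite[Theorem~4.2]{HAntil_DVerma_MWarma_2019b}, and for the regularized problem that analogue is precisely your three steps — $C^1$ differentiability of the Moreau--Yosida penalty (with derivative $v\mapsto((\hat\mu+\gamma u)_+,v)_{L^2(\Omega)}$), the variational inequality for the reduced functional on the convex set $Z_{ad}$, and elimination of $\boldsymbol{S}$ through the Lax--Milgram adjoint $\bar\xi^\gamma\in\widetilde{W}^{s,2}_0(\Omega)$ using the symmetry of $\mathcal{E}$. Your remarks on sufficiency via convexity of $\mathcal{J}^\gamma$ and on the $L^2$-regularity of the multiplier replacing the measure $\bar\mu$ are likewise accurate.
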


\section{Convergence analysis}
\label{s:convergence}
The purpose of this section is to show that the regularized problem \eqref{eq:regdcp} is indeed an approximation to the original problem \eqref{eq:dcp}.
We follow the approach of \cite{HiHi2009} and adapt it to the fractional case. 
For completeness, and to make the paper self-contained, we provide almost all the details.  
We begin by obtaining a uniform bound on the regularization term. Recalling the definition 
\eqref{eq:regrpDir}, observe that for $\ga\ge 1$,
\begin{align} \label{eq:rel}
\begin{aligned}
	\mathcal{J}(\bar{z}^\ga)\le \mathcal{J}^\ga(\bar{z}^\ga)\le \mathcal{J}^\ga(\bar{z})
	&\le \mathcal{J}(\bar{z})+\frac{1}{2 \ga} \|\hat{\mu}\|^2_{L^2(\Om)} \\
	&\le \mathcal{J}(\bar{z})+\frac{1}{2} \|\hat{\mu}\|^2_{L^2(\Om)} \eqqcolon C_{\bar{z}} ,
\end{aligned}	
\end{align}
where in the second inequality we have used that $\bar{z}^\ga$ is the minimizer of \eqref{eq:regdcp}. Moreover, the second to last inequality in \eqref{eq:rel} follows from the fact that $\bar{u} \le 0$ (since $u_b = 0$) and $\hat{\mu}\ge 0$ a.e. in $\Om$.

Using \eqref{eq:rel} it follows that $\frac{1}{2 \ga} \|(\hat{\mu}+\ga \bar{u}^\gamma)_+\|^2$ is uniformly bounded. Also from \eqref{eq:rel} we have that 
\begin{align*}
	\|(\bar{u}^\ga)_+\|^2_{L^2(\Om)}
	&=\frac{1}{\ga^2}\|(\ga \bar{u}^\ga)_+\|^2_{L^2(\Om)}  \\
	  &\le \frac{1}{\ga^2} \| (\hat{\mu} + \ga \bar{u}^\ga)_+\|^2_{L^2(\Om)}=\frac{2}{\ga} \left(\mathcal{J}^\ga(\bar{z}^\ga)-\mathcal{J}(\bar{z}^\ga)\right)\\
	&\le \frac{2}{\ga} \left(\mathcal{J}(\bar{z})-\mathcal{J}(\bar{z}^\ga)+\frac{1}{2\ga} \|\hat{\mu}\|^2_{L^2(\Om)}\right) .
\end{align*}
Thus 
\begin{align} \label{uplusbnd}
	\|(\bar{u}^\ga)_+\|_{L^2(\Om)}\le \om(\ga^{-1})\ga^{-\frac{1}{2}},
\end{align}
where 
\[ 
	\om(\ga^{-1}) := 2\text{ max }\left(\frac{1}{2\ga} \|\hat{\mu}\|^2_{L^2(\Om)},\left(\mathcal{J}(\bar{z})-		\mathcal{J}(\bar{z}^\ga)\right)_+ \right)^{1/2}.
\]
Since $\bar{z}^\ga\rightarrow \bar{z}$ strongly in $L^2(\Om)$ by \cite[Proposition~2.1]{hintermuller_kunisch} and $\mathcal{J}$ is continuous, we obtain that $\om (\gamma^{-1})\downarrow0$ as $\gamma \to \infty$.
Moreover, from \eqref{eq:rel} we have $\mathcal{J}(\bar{z}^\ga)\le C_{\bar{z}}$ which yields
\begin{align} \label{regsolbound}
	\text{max}\left( \|\bar{u}^\ga - u_d\|^2_{L^2(\Om)}, \alpha \|\bar{z}^\ga\|^2_{L^2(\Om)}  \right) \le 2 C_{\bar{z}}. 
\end{align}
Then from \eqref{eq:regb}, in conjunction with \eqref{uplusbnd} and \eqref{regsolbound},
we obtain that  
\begin{align*} 
	\|\bar{\xi}^\ga\|_{\widetilde{W}^{s,p}_0(\Om)}
	& \le C\left(\|\bar{u}^\ga - u_d\|_{L^2(\Om)} + \|\hat{\mu}\|_{L^2(\Om)}+ \gamma\|(\bar{u}^\ga)_+\|_{L^2(\Om)} \right) \\
	& \le C\left(2+\om(\ga^{-1})\sqrt{\ga}) \right),
\end{align*} 
where the generic constant $C$ is independent of $\ga$.

We now estimate the distance between $(\bar{u},\bar{z})$ and $(\bar{u}^{\ga},\bar{z}^\ga)$. In the following proof and throughout the rest of the paper, all integrals will be assumed to be Lebesgue, unless otherwise noted.
\begin{theorem} \label{distance}
Let	$(\bar{u},\bar{z})$ and $(\bar{u}^{\ga},\bar{z}^\ga)$ denote the solutions of \eqref{eq:dcp} and \eqref{eq:regdcp} respectively. Then,
\begin{align} \label{eq:distance}
	\alpha \|\bar{z}-\bar{z}^\ga\|^2_{L^2(\Om)} &+ \|\bar{u}-\bar{u}^\ga\|^2_{L^2(\Om)}+\ga \|(\bar{u}^\ga)_+\|^2_{L^2(\Om)} \nonumber \\
	&\le \frac{1}{\ga} \|\hat{\mu}\|^2_{L^2(\Om)}+
	\left\langle \bar{\mu},\bar{u}^\ga \right\rangle _{\mathcal{M}(\Om),C_0(\Om)},
\end{align}
and hence
\begin{align} \label{eq:violation}
\|(\bar{u}^\ga)_+\|_{L^2(\Om)}\le \sqrt{\frac{2}{\ga}} \max \left( \frac{1}{\ga}\|\hat{\mu}\|^2_{L^2(\Om)},\left\langle \bar{\mu},(\bar{u}^\ga)_+ \right\rangle _{\mathcal{M}(\Om),C_0(\Om)}\right)^{1/2}.
\end{align}
\end{theorem}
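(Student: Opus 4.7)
The plan is to compare the first order optimality systems \eqref{eq:a}--\eqref{eq:d} and \eqref{eq:rega}--\eqref{eq:regc}, reduce everything to quantities involving the states and multipliers, and then carry out one careful algebraic manipulation of the Moreau-Yosida term. First, I would test the original variational inequality \eqref{eq:c} with $z=\bar z^\ga$ and the regularized one \eqref{eq:regc} with $z=\bar z$ (both admissible in $Z_{ad}$) and add to get
\[
\alpha\|\bar z-\bar z^\ga\|_{L^2(\Om)}^2 \le \langle \bar\xi-\bar\xi^\ga,\,\bar z^\ga-\bar z\rangle .
\]
Then I would reformulate the right-hand side via the adjoint equations. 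Since $\bar u^\ga-\bar u\in U$ and the state equations give $(-\Delta)^s_D(\bar u^\ga-\bar u)=\bar z^\ga-\bar z$, testing \eqref{eq:b} with $v=\bar u^\ga-\bar u$ yields
\[
\langle\bar\xi,\bar z^\ga-\bar z\rangle=(\bar u-u_d,\bar u^\ga-\bar u)_{L^2(\Om)}+\int_\Om(\bar u^\ga-\bar u)\,d\bar\mu ,
\]
and by symmetry of $\mathcal E$, using the state equations to move $(-\Delta)^s_D$ off $\bar u^\ga-\bar u$ onto $\bar\xi^\ga$ in \eqref{eq:regb},
\[
(\bar\xi^\ga,\bar z^\ga-\bar z)_{L^2(\Om)}=(\bar u^\ga-u_d,\bar u^\ga-\bar u)_{L^2(\Om)}+((\hat\mu+\ga\bar u^\ga)_+,\bar u^\ga-\bar u)_{L^2(\Om)}.
\]
Subtracting and invoking the complementary slackness $\int_\Om \bar u\,d\bar\mu=0$ (from \eqref{eq:d} with $u_b\equiv 0$), so that $\int_\Om(\bar u^\ga-\bar u)\,d\bar\mu=\langle\bar\mu,\bar u^\ga\rangle_{\mathcal M(\Om),C_0(\Om)}$, leads to
\[
\alpha\|\bar z-\bar z^\ga\|^2+\|\bar u-\bar u^\ga\|^2 \le \langle\bar\mu,\bar u^\ga\rangle -((\hat\mu+\ga\bar u^\ga)_+,\bar u^\ga-\bar u)_{L^2(\Om)}.
\]

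The main obstacle, and the only step that is not purely mechanical, is to prove
\[
((\hat\mu+\ga\bar u^\ga)_+,\bar u^\ga-\bar u)_{L^2(\Om)} \ge \ga\|(\bar u^\ga)_+\|_{L^2(\Om)}^2 - \tfrac1\ga\|\hat\mu\|_{L^2(\Om)}^2 ,
\]
which would plug directly into \eqref{eq:distance}. I would handle this by setting $h:=\hat\mu+\ga\bar u^\ga$ and using four ingredients: (i) $\bar u\le 0$ together with $h_+\ge 0$ gives $-(h_+,\bar u)_{L^2}\ge 0$; (ii) the pointwise identity $h_+\cdot h = h_+^2$ integrated over $\Om$ rearranges to $\ga(h_+,\bar u^\ga)_{L^2}=\|h_+\|^2-(h_+,\hat\mu)_{L^2}$; (iii) the elementary identity $\|h_+\|^2-(h_+,\hat\mu)+\|\hat\mu\|^2 = \|h_+-\hat\mu\|^2 + (h_+,\hat\mu) \ge \|h_+-\hat\mu\|^2$ since $h_+,\hat\mu\ge 0$; and (iv) on $\{\bar u^\ga>0\}$ one has $h>0$, hence $h_+=h$ and $h_+-\hat\mu=\ga\bar u^\ga$, so $\|h_+-\hat\mu\|^2\ge \ga^2\|(\bar u^\ga)_+\|^2$. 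Dividing by $\ga$ and combining (i)--(iv) delivers the claimed lower bound and hence \eqref{eq:distance}.

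Finally, to extract \eqref{eq:violation} from \eqref{eq:distance}, I would drop the two non-negative terms $\alpha\|\bar z-\bar z^\ga\|^2$ and $\|\bar u-\bar u^\ga\|^2$, then bound $\langle\bar\mu,\bar u^\ga\rangle = \langle\bar\mu,(\bar u^\ga)_+\rangle - \langle\bar\mu,(\bar u^\ga)_-\rangle \le \langle\bar\mu,(\bar u^\ga)_+\rangle$ using $\bar\mu\ge 0$ and $(\bar u^\ga)_-\ge 0$. This gives $\ga\|(\bar u^\ga)_+\|^2 \le \tfrac1\ga\|\hat\mu\|^2 + \langle\bar\mu,(\bar u^\ga)_+\rangle \le 2\max\bigl(\tfrac1\ga\|\hat\mu\|^2,\langle\bar\mu,(\bar u^\ga)_+\rangle\bigr)$, and \eqref{eq:violation} follows by dividing by $\ga$ and taking square roots.
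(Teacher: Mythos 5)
Your proposal is correct and follows essentially the same route as the paper: combine the two variational inequalities, convert the $(\bar\xi^\ga-\bar\xi,\bar z-\bar z^\ga)$ term via the state and adjoint equations, use complementary slackness $\langle\bar\mu,\bar u\rangle=0$ and $\bar u\le 0$, and then bound the Moreau--Yosida term to isolate $\ga\|(\bar u^\ga)_+\|^2_{L^2(\Om)}$. The only (harmless) difference is cosmetic: where the paper estimates $\int_\Om(\hat\mu+\ga\bar u^\ga)_+(-\bar u^\ga)$ by restricting to the active set $\Om_\ga^+(\hat\mu)$ and using the pointwise bound $-\bar u^\ga<\hat\mu/\ga$ there, you obtain the same bound $-\ga\|(\bar u^\ga)_+\|^2+\tfrac1\ga\|\hat\mu\|^2$ by the identity $h_+h=h_+^2$ and completing a square.
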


\begin{proof}
	We start with the use of optimality conditions \eqref{eq:c} with $z=\bar{z}^\ga$ and $z=\bar{z}$ in \eqref{eq:regc}.  This  yields
	\begin{align}
		\alpha \|\bar{z}-\bar{z}^\ga\|^2_{L^2(\Om)} 
		 &\le \int_{\Om} (\bar{z}-\bar{z}^\ga)(\bar{\xi}^\ga - \bar{\xi}) 
		 . \label{eq:4a}
	\end{align}		 
	Next we write the state equations \eqref{eq:Sd} and \eqref{eq:rega} as
	\[
		\langle (-\Delta)^s_D\bar{u}, v \rangle_{L^p(\Om),L^{p'}(\Om)} 
		= \langle \bar{z}, v \rangle_{L^p(\Om),L^{p'}(\Om)}  \quad \forall v \in L^{p'}(\Om), 
	\]
	and 
	\[
		\langle (-\Delta)^s_D\bar{u}^\ga, v \rangle_{L^p(\Om),L^{p'}(\Om)} 
		= \langle \bar{z}^\ga, v \rangle_{L^p(\Om),L^{p'}(\Om)}  \quad \forall v \in L^{p'}(\Om). 
	\]
	By subtracting them, we obtain that 
	\begin{equation}\label{eq:4b}
		\langle (-\Delta)^s_D(\bar{u} - \bar{u}^\ga), v \rangle_{L^p(\Om),L^{p'}(\Om)} 
		= \langle \bar{z} - \bar{z}^\ga, v \rangle_{L^p(\Om),L^{p'}(\Om)}  \quad \forall v \in L^{p'}(\Om).
	\end{equation}
	Recall that both $\bar{\xi} \in L^{p'}(\Om)$ and $\bar{\xi}^\ga \in \widetilde{W}^{s,2}_0(\Om) \hookrightarrow L^{p'}(\Om)$ (see Remark~\ref{remark}), so we can set 
	$v := \bar{\xi}^\ga - \bar{\xi}$ in \eqref{eq:4b}. Subsequently, substituting \eqref{eq:4b} in \eqref{eq:4a}, we obtain 
	\begin{equation}\label{eq:4c}
	\begin{aligned}
		\alpha \|\bar{z}-\bar{z}^\ga\|^2_{L^2(\Om)} 
		&\le \langle (-\Delta)^s_D(\bar{u} - \bar{u}^\ga), \bar{\xi}^\ga - \bar{\xi} \rangle_{L^p(\Om),L^{p'}(\Om)} \\
		&= -\|\bar{u} - \bar{u}^\ga \|_{L^2(\Om)}^2 +\int_{\Om} (\hat{\mu}+\ga \bar{u}^\ga)_+(\bar{u}-\bar{u}^\ga)\\
		&\quad - \left\langle \bar{\mu},\bar{u}-\bar{u}^\ga \right\rangle _{\mathcal{M}(\Om),C_0(\Om)}
	\end{aligned}	
	\end{equation}
	where we have used \eqref{eq:b} and \eqref{eq:regb}, and that 
	$\bar{u}, \bar{u}^\gamma \in U$.
	Moreover, from \eqref{eq:d} we have that $\left\langle \bar{\mu},\bar{u} \right\rangle _{\mathcal{M}(\Om),C_0(\Om)} = 0$. Substituting this in \eqref{eq:4c}, we obtain 
	that 
	\begin{equation}\label{eq:4d}
	\begin{aligned}
		\alpha \|\bar{z}-\bar{z}^\ga\|^2_{L^2(\Om)} 	
		+ \|\bar{u} - \bar{u}^\ga \|_{L^2(\Om)}^2 	
		\hspace{-2pt} &= \hspace{-1pt}\left\langle \bar{\mu},\bar{u}^\ga \right\rangle _{\mathcal{M}(\Om),C_0(\Om)} \hspace{-1pt}+ \hspace{-1pt}
		\int_{\Om} (\hat{\mu}+\ga \bar{u}^\ga)_+(\bar{u}-\bar{u}^\ga)
		  \\
		&\le \left\langle \bar{\mu},\bar{u}^\ga \right\rangle _{\mathcal{M}(\Om),C_0(\Om)} +
		\int_{\Om} (\hat{\mu}+\ga \bar{u}^\ga)_+(-\bar{u}^\ga)		    
	\end{aligned}	
	\end{equation}
	where we have used that $\int_{\Om} (\hat{\mu}+\ga \bar{u}^\ga)_+(\bar{u}) \le 0$.
	Next, we shall analyze the integral $\int_{\Om} (\hat{\mu}+\ga \bar{u}^\ga)_+(-\bar{u}^\ga)$. 
	Let 
	\begin{equation}\label{eq:4f}
		\Om_{\ga}^+(\hat{\mu})\coloneqq \{x\in \Om \; : \;\hat{\mu}+\ga \bar{u}^\ga >0\},
	\end{equation}
	then we have that $ \int_{\Om \setminus \Om_{\ga}^+(\hat{\mu})} 
		  (\hat{\mu}+\ga \bar{u}^\ga)_+(-\bar{u}^\ga) = 0.$ This allows us to write
	\begin{align}\label{eq:4e}
			\int_{\Om} (\hat{\mu}+\ga \bar{u}^\ga)_+(-\bar{u}^\ga)
		&= \int_{\Om_{\ga}^+(\hat{\mu})} (\hat{\mu}+\ga \bar{u}^\ga)(-\bar{u}^\ga) \nonumber \\
		&\le - \ga \|\bar{u}^\ga\|^2_{L^2(\Om_{\ga}^+(\hat{\mu}))}  
		   -\int_{\Om_{\ga}^+(\hat{\mu})}\hat{\mu} \bar{u}^\ga	
		   \nonumber \\
		&\le - \ga \|(\bar{u}^\ga)_+\|^2_{L^2(\Om)}  
		   + \frac{1}{\ga} \|\hat{\mu}\|^2_{L^2(\Om)},	 	 
	\end{align}
	where in the first term in the last step we have used that 
\begin{equation} \label{eq:uPlusNorm}
\ga \|\bar{u}^\ga\|^2_{L^2(\Om_{\ga}^+(\hat{\mu}))} \ge \ga \|\bar{u}^\ga\|^2_{L^2(\Om_{\ga}^+(0))}=\ga \|(\bar{u}^\ga)_+\|^2_{L^2(\Om)},
\end{equation}
because $\Om_{\ga}^+(\hat{\mu})\supseteq \Om_{\ga}^+(0):=\{x \in \Omega : \bar{u}^\ga>0\}$.
	Moreover, we have estimated the second term by using the fact that 
	$\bar{u}^\ga> \dfrac{-\hat{\mu}}{\ga}$ in $\Om_{\ga}^+(\hat{\mu})$.
	Substituting \eqref{eq:4e} in \eqref{eq:4d}, we obtain \eqref{eq:distance}. 
	
	Next, we have that $(\bar{u}^\ga)_+\in C_0(\Om)$ because $\bar{u}^\ga$ being the solution to state equation \eqref{eq:Sd} is in $C_0(\Om)$. This together with non negativity of $\bar{\mu}\in \mathcal{M}(\Om)$ combined with \eqref{eq:distance}, yields \eqref{eq:violation}.
\end{proof}

\begin{theorem} \label{ydorder}
	If $0\in Z_{ad}$ and $u_d\ge 0$ a.e. in $\Om$, then $\|(\bar{u}^\ga)_+\|_{L^2(\Om)}=\mathcal{O} (\ga^{-1})$ as $\ga \rightarrow \infty$.
\end{theorem}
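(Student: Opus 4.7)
The plan is to use the extra hypotheses $0\in Z_{ad}$ and $u_d\ge 0$ to bypass the coupling with $\bar\mu$ in \eqref{eq:violation}, and instead extract a self-coupled energy estimate for $\bar u^\ga$ alone. The hypothesis $0\in Z_{ad}$ will supply a favorable sign on the control side via \eqref{eq:regc}; the hypothesis $u_d\ge 0$ will be used at the very end to turn an $L^2$-inner product with $\bar u^\ga$ into one with $(\bar u^\ga)_+$.

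Concretely, testing \eqref{eq:regc} with $z=0$ yields $(\bar\xi^\ga,\bar z^\ga)_{L^2(\Om)}\le -\alpha\|\bar z^\ga\|^2_{L^2(\Om)}\le 0$. Testing the state equation \eqref{eq:rega} with $v=\bar\xi^\ga$ and the adjoint equation \eqref{eq:regb} with $v=\bar u^\ga$ and using the symmetry of $\mathcal{E}$ rewrites the left-hand side as $(\bar u^\ga-u_d,\bar u^\ga)_{L^2(\Om)}+((\hat\mu+\ga\bar u^\ga)_+,\bar u^\ga)_{L^2(\Om)}$, producing the key inequality
\[
\|\bar u^\ga\|^2_{L^2(\Om)}-(u_d,\bar u^\ga)_{L^2(\Om)}+((\hat\mu+\ga\bar u^\ga)_+,\bar u^\ga)_{L^2(\Om)}\le 0.
\]
The last term is bounded from below by a near-mirror of the calculation in \eqref{eq:4e}: on $\Om_\ga^+(\hat\mu)\cap\{\bar u^\ga\ge 0\}$ the integrand is at least $\ga(\bar u^\ga)^2$, while on $\Om_\ga^+(\hat\mu)\cap\{\bar u^\ga<0\}$ one has $-\hat\mu/\ga<\bar u^\ga<0$ and $0<\hat\mu+\ga\bar u^\ga<\hat\mu$, so the integrand exceeds $-\hat\mu^2/\ga$. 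This gives
\[
((\hat\mu+\ga\bar u^\ga)_+,\bar u^\ga)_{L^2(\Om)}\ge \ga\|(\bar u^\ga)_+\|^2_{L^2(\Om)}-\ga^{-1}\|\hat\mu\|^2_{L^2(\Om)}.
\]

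Plugging this in, discarding the nonnegative $\|\bar u^\ga\|^2_{L^2(\Om)}$, and using $u_d\ge 0$ to get $(u_d,\bar u^\ga)_{L^2(\Om)}\le (u_d,(\bar u^\ga)_+)_{L^2(\Om)}\le \|u_d\|_{L^2(\Om)}\|(\bar u^\ga)_+\|_{L^2(\Om)}$, one reaches
\[
\ga\|(\bar u^\ga)_+\|^2_{L^2(\Om)}\le \|u_d\|_{L^2(\Om)}\|(\bar u^\ga)_+\|_{L^2(\Om)}+\ga^{-1}\|\hat\mu\|^2_{L^2(\Om)},
\]
and a single application of Young's inequality absorbs the linear middle term into the left-hand side to yield $\|(\bar u^\ga)_+\|^2_{L^2(\Om)}\le \ga^{-2}\bigl(\|u_d\|^2_{L^2(\Om)}+2\|\hat\mu\|^2_{L^2(\Om)}\bigr)$. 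Nothing genuinely hard arises here; the only conceptual step is recognizing that under the present hypotheses the sharp rate should be read off from a self-coupled identity for $\bar u^\ga$ rather than through the $\langle\bar\mu,(\bar u^\ga)_+\rangle$ coupling of \eqref{eq:violation}, for which no comparable quantitative control is available.
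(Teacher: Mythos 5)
Your proposal is correct and follows essentially the same route as the paper: pair the state equation tested with $\bar{\xi}^\ga$ against the adjoint equation tested with $\bar{u}^\ga$, use $0\in Z_{ad}$ in \eqref{eq:regc} to obtain $(\bar{\xi}^\ga,\bar{z}^\ga)_{L^2(\Om)}\le-\alpha\|\bar{z}^\ga\|^2_{L^2(\Om)}$, bound the term $((\hat{\mu}+\ga\bar{u}^\ga)_+,\bar{u}^\ga)_{L^2(\Om)}$ from below on $\Om_\ga^+(\hat{\mu})$ exactly as in \eqref{eq:4e}--\eqref{eq:uPlusNorm}, and finish with $u_d\ge 0$ plus Young's inequality. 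The only differences (taking $z=0$ rather than $z=\bar{z}^\ga/2$, and splitting the positive-part integral by the sign of $\bar{u}^\ga$ rather than into the $\hat{\mu}\bar{u}^\ga$ and $\ga(\bar{u}^\ga)^2$ pieces) are cosmetic.
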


\begin{proof}
	Recall that the regularized state and the adjoint variables $\bar{u}^\ga$ and $\bar{\xi}^\ga$ satisfy
	\begin{align*}
		\mathcal{E}(\bar{u}^\ga,v)&=\int_{\Om} \bar{z}^\ga v ,\; && \forall v\in \widetilde{W}^{s,2}_0(\Om),\\
		\mathcal{E}(\bar{\xi}^\ga,v)&=\int_{\Om} (\bar{u}^\ga - u_d)v + \int_{\Om} (\hat{\mu}+\ga \bar{u}^\ga)_+ v ,\; && \forall v\in \widetilde{W}^{s,2}_0(\Om).
	\end{align*}
	Substituting $v=\bar{\xi}^\ga$ in the first equation and $v=\bar{u}^\ga$ in the second and then subtracting yields
	\begin{equation}\label{eq:5a}
		0=\|\bar{u}^\ga\|^2_{L^2(\Om)}-\int_{\Om} \bar{u}^\ga u_d +\int_{\Om} (\hat{\mu} +\ga\bar{u}^\ga)_+ \bar{u}^\ga - \int_{\Om} \bar{z}^\ga \bar{\xi}^\ga.
	\end{equation}
	Using the definition of the set $\Om_\ga^+(\hat\mu)$ from \eqref{eq:4f} in conjunction with \eqref{eq:5a}, we obtain that 
\begin{alignat*}{5}
		0&\ge \|\bar{u}^\ga\|^2_{L^2(\Om)}-\int_{\Om} \bar{u}^\ga u_d +\int_{\Om_{\ga}^+(\hat{\mu})} \hat{\mu} \bar{u}^\ga + \int_{\Om_{\ga}^+(\hat{\mu})} \ga (\bar{u}^\ga)^2 - \int_{\Om} \bar{z}^\ga \bar{\xi}^\ga\\
	  &\ge \|\bar{u}^\ga\|^2_{L^2(\Om)}-\int_{\Om} \bar{u}^\ga u_d -\int_{\Om_{\ga}^+(\hat{\mu})} \ga^{-1} \hat{\mu}^2  +\int_{\Om_{\ga}^+(0)} \ga (\bar{u}^\ga)^2 - \int_{\Om} \bar{z}^\ga \bar{\xi}^\ga,
\end{alignat*}
where in the last inequality we have used a similar technique as in the end of the previous proof. Setting  $z=\frac{\bar{z}^\ga}{2} \in Z_{ad}$ (because $Z_{ad}$ is convex and $0 \in Z_{ad}$) in \eqref{eq:regc}, we obtain that 
	\begin{alignat*}{3}
		0  &\ge \|\bar{u}^\ga\|^2_{L^2(\Om)}-\int_{\Om} \bar{u}^\ga u_d -\int_{\Om_{\ga}^+(\hat{\mu})} \ga^{-1} \hat{\mu}^2  +\int_{\Om_{\ga}^+(0)} \ga (\bar{u}^\ga)^2 +\alpha \int_{\Om} (\bar{z}^\ga)^2\\
		&\geq   \|\bar{u}^\ga\|^2_{L^2(\Om)}-\int_{\Om} \bar{u}^\ga u_d -\int_{\Om_{\ga}^+(\hat{\mu})} \ga^{-1} \hat{\mu}^2  + \ga \|(\bar{u}^\ga)_+\|_{L^2(\Omega)}^2 +\alpha \|\bar{z}^\ga\|_{L^2(\Omega)}^2,
\end{alignat*}
where we have once again used \eqref{eq:uPlusNorm}.  From here, we can bound
	\[
		\ga \|(\bar{u}^\ga)_+\|_{L^2(\Om)}^2\le \int_{\Om_{\ga}^+(\hat{\mu})} \ga^{-1} \hat{\mu}^2 +\int_{\Om} \bar{u}^\ga u_d.
	\]
	Now since $u_d\ge 0$ a.e. in $\Om$ 
	\[
		\ga \|(\bar{u}^\ga)_+\|_{L^2(\Om)}^2\le \int_{\Om} \ga^{-1} \hat{\mu}^2 +\int_{\Om} (\bar{u}^\ga)_+ u_d. 
	\]
	We can now apply the Cauchy-Schwarz inequality and Young's inequality to the second term on the right-hand side to obtain
\[
		\ga \|(\bar{u}^\ga)_+\|_{L^2(\Om)}^2\le \int_{\Om} \ga^{-1} \hat{\mu}^2 +\frac{\gamma}{2} \|(\bar{u}^\ga)_+\|_{L^2(\Omega}^2 + \frac{1}{2\gamma} \| u_d\|_{L^2(\Omega)}^2, 
	\]	
	from which the result follows.
\end{proof}
Now, we will relax the condition on $u_d$ in Theorem \ref{ydorder}.

\begin{theorem} \label{relygaorder}
	If there exists $\epsilon>0$ such that 
	\[ 
		-\int_{\Om} u_d^- \bar{u} -\|\bar{u}\|^2_{L^2(\Om)} -\alpha \|\bar{z}\|^2_{L^2(\Om)} \le -\epsilon,
	\]
	then $\|(\bar{u}^\ga)_+\|_{L^2(\Om)}=\mathcal{O} (\ga^{-1})$ as $\ga \rightarrow \infty$. Here, $u_d^-$ denotes the negative part of the function $u_d$. 
\end{theorem}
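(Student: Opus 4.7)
My plan is to mimic the argument of Theorem~\ref{ydorder}, with the pointwise sign condition $u_d\ge 0$ replaced by the variational gap provided by the hypothesis. The starting identity is the same: substituting $v=\bar{\xi}^\ga$ in \eqref{eq:rega} and $v=\bar u^\ga$ in \eqref{eq:regb}, subtracting, using $\bar u^\ga>-\hat\mu/\gamma$ on $\Om_\ga^+(\hat\mu)$ together with $\Om_\ga^+(\hat\mu)\supseteq\Om_\ga^+(0)$ to estimate the $(\hat\mu+\gamma\bar u^\ga)_+\bar u^\ga$ integral, and testing \eqref{eq:regc} with $z=\bar z^\ga/2\in Z_{ad}$ (valid since $0\in Z_{ad}$), I obtain
\begin{equation*}
\gamma\|(\bar u^\ga)_+\|_{L^2(\Om)}^2 + \|\bar u^\ga\|_{L^2(\Om)}^2 + \alpha\|\bar z^\ga\|_{L^2(\Om)}^2 \le \int_\Om \bar u^\ga u_d + \gamma^{-1}\|\hat\mu\|_{L^2(\Om)}^2.
\end{equation*}

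To handle $\int_\Om \bar u^\ga u_d$ without pointwise positivity of $u_d$, I split $u_d=u_d^+-u_d^-$ and treat each piece separately. For the positive part I recycle the Theorem~\ref{ydorder} calculation: since $u_d^+\ge 0$, $\int_\Om \bar u^\ga u_d^+\le\int_\Om (\bar u^\ga)_+ u_d^+\le \tfrac{\gamma}{2}\|(\bar u^\ga)_+\|_{L^2(\Om)}^2+\tfrac{1}{2\gamma}\|u_d^+\|_{L^2(\Om)}^2$ by Young's inequality. For the negative part I write $\bar u^\ga=\bar u+(\bar u^\ga-\bar u)$, obtaining $-\int_\Om \bar u^\ga u_d^-=-\int_\Om \bar u\,u_d^-+\int_\Om(\bar u-\bar u^\ga)u_d^-$; the strict hypothesis bounds the first integral by $\|\bar u\|_{L^2(\Om)}^2+\alpha\|\bar z\|_{L^2(\Om)}^2-\epsilon$, while Cauchy--Schwarz bounds the second by $\|\bar u-\bar u^\ga\|_{L^2(\Om)}\|u_d^-\|_{L^2(\Om)}$.

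Absorbing the Young term into the left-hand side and rearranging then yields
\begin{equation*}
\tfrac{\gamma}{2}\|(\bar u^\ga)_+\|_{L^2(\Om)}^2 \le \bigl(\|\bar u\|_{L^2(\Om)}^2-\|\bar u^\ga\|_{L^2(\Om)}^2\bigr) + \alpha\bigl(\|\bar z\|_{L^2(\Om)}^2-\|\bar z^\ga\|_{L^2(\Om)}^2\bigr) - \epsilon + \|\bar u-\bar u^\ga\|_{L^2(\Om)}\|u_d^-\|_{L^2(\Om)} + C\gamma^{-1},
\end{equation*}
with $C=\tfrac{1}{2}\|u_d^+\|_{L^2(\Om)}^2+\|\hat\mu\|_{L^2(\Om)}^2$ independent of $\gamma$. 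Strong $L^2$-convergence $\bar u^\ga\to\bar u$ and $\bar z^\ga\to\bar z$, supplied by \cite[Proposition~2.1]{hintermuller_kunisch}, drives the first four terms on the right to $-\epsilon$ as $\gamma\to\infty$, so the right-hand side is strictly negative for every $\gamma\ge\gamma_0$ with $\gamma_0$ large enough. Nonnegativity of the left-hand side then forces $(\bar u^\ga)_+\equiv 0$ for $\gamma\ge\gamma_0$, and the uniform bound \eqref{uplusbnd} covers the range $\gamma<\gamma_0$, yielding $\|(\bar u^\ga)_+\|_{L^2(\Om)}=\mathcal{O}(\gamma^{-1})$.

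The conceptually delicate point is that no explicit rate on the approximation error $\bar u^\ga-\bar u$ or $\bar z^\ga-\bar z$ is required: qualitative strong convergence suffices because the strict inequality in the hypothesis furnishes a uniform gap $\epsilon>0$ that eventually dominates every convergence-to-zero error term. The rest is a direct reuse of the Theorem~\ref{ydorder} machinery, and in fact the argument gives the stronger conclusion that $(\bar u^\ga)_+$ vanishes identically for all sufficiently large $\gamma$.
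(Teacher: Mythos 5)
Your proposal does not follow the paper's route: the paper's proof of Theorem~\ref{relygaorder} consists of verifying that $\bar z^\ga\to\bar z$ in $L^2(\Om)$ and $\bar u^\ga\to\bar u$ in $\widetilde{W}^{s,2}_0(\Om)$ and then arguing exactly as in \cite[Theorem~2.3]{HiHi2009}, whereas you rerun the computation of Theorem~\ref{ydorder}. Two genuine problems arise there. First, your starting inequality needs the test function $z=\bar z^\ga/2$ in \eqref{eq:regc}, hence the assumption $0\in Z_{ad}$, which is a hypothesis of Theorem~\ref{ydorder} but is not part of Theorem~\ref{relygaorder}; without it the bound $-(\bar\xi^\ga,\bar z^\ga)_{L^2(\Om)}\ge\alpha\|\bar z^\ga\|^2_{L^2(\Om)}$ cannot be extracted from \eqref{eq:regc} and your first display is unavailable. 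Second, and more seriously, the concluding inference is invalid: from $0\le\tfrac{\ga}{2}\|(\bar u^\ga)_+\|^2_{L^2(\Om)}\le \mathrm{RHS}$ together with $\mathrm{RHS}<0$ one does not deduce $(\bar u^\ga)_+\equiv0$; one deduces a contradiction. The legitimate way to close such an argument is never to let the right-hand side go negative, but to pick $\ga_0$ so that the terms tending to zero are dominated by $\epsilon$, leaving $\tfrac{\ga}{2}\|(\bar u^\ga)_+\|^2_{L^2(\Om)}\le C\ga^{-1}$ for $\ga\ge\ga_0$.

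The contradiction you produced is not a cosmetic slip; it shows the route through $0\in Z_{ad}$ cannot prove this theorem. Drop the nonnegative term $\ga\|(\bar u^\ga)_+\|^2_{L^2(\Om)}$ from your first display and pass to the limit $\ga\to\infty$ using the very strong convergences you invoke: this yields $\|\bar u\|^2_{L^2(\Om)}+\alpha\|\bar z\|^2_{L^2(\Om)}\le\int_\Om \bar u\, u_d$. Since the optimal state is feasible and $u_b\equiv0$, we have $\bar u\le0$, hence $\int_\Om \bar u\,u_d\le-\int_\Om u_d^-\,\bar u$, and therefore $-\int_\Om u_d^-\,\bar u-\|\bar u\|^2_{L^2(\Om)}-\alpha\|\bar z\|^2_{L^2(\Om)}\ge0$, which is incompatible with the $\epsilon$-gap hypothesis. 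In other words, with your added assumption the hypothesis class is empty, so your argument at best establishes a vacuous implication, and the negativity of your right-hand side for large $\ga$ is exactly this inconsistency surfacing. In the regime where the hypothesis of Theorem~\ref{relygaorder} can actually hold, $0\in Z_{ad}$ must fail, the Theorem~\ref{ydorder} machinery does not apply, and a different argument is required; this is precisely why the paper reduces the claim to strong convergence of $(\bar z^\ga,\bar u^\ga)$ and defers to the argument of \cite[Theorem~2.3]{HiHi2009} rather than repeating the computation you propose.
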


\begin{proof}
	It is sufficient to show that $\bar{z}^\ga\rightarrow \bar{z}$ in $L^2(\Om)$ and $\bar{u}^\ga\rightarrow \bar{u}$ in $\widetilde{W}^{s,2}_0(\Om)$. Then we can argue exactly
	as in \cite[Theorem~2.3]{HiHi2009}. 
\end{proof}

\section{Finite element discretization and convergence analysis}
\label{s:fem}

In this section, we will discuss an a priori analysis of error due to discretization. 
We assume we have a quasi-uniform triangulation of $\Omega$, denoted $\mathcal T_h$, consisting of triangles $T$ such that $\cup_{T \in \mathcal{T}_h}={\overline{\Om}}$. Given $\mathcal T_h$, we define the finite element space for the state and adjoint variables as
\[ 
 U_h=\{ u_h\in C_0(\Om): u_h\big|_T \in \mathcal{P}_1,\;\forall T\in \mathcal{T}_h\},
\]
where $\mathcal P_1$ is the space of polynomials of degree at most one. 

Next we define the admissible discrete control space as 
\[
Z_{ad,h} := \{z_h \in Z_{ad} : z_h\big|_T \in \mathcal P_0, \ \forall T \in \mathcal T_h\},
\]
i.e., the admissible controls are piecewise constant on the triangulation $\mathcal T_h$.

For a given $z_h \in Z_{ad,h}$, there exists a unique solution to the the problem find $u_h\in U_h$ such that 
\[
	\mathcal{E}(u_h,v_h)=\int_{\Om}z_h v_h,\;\;\; \forall v_h\in U_h,
\]
where $\mathcal{E}$ is as defined in \eqref{intbyParts}. We can therefore define the discrete solution operator $\boldsymbol{S}_h:Z_{ad}\rightarrow U_h$ whose action is given by $z_h \mapsto \boldsymbol{S}_h z_h =:u_h$. 
Now the fully discrete version of the regularized problem \eqref{eq:regdcp} in reduced form is given by 	
	\begin{equation} \label{eq:fullDisc}
		\min_{z_h\in Z_{ad,h}} \mathcal J_h^\ga(z_h):= \frac12 \|\boldsymbol S_h z_h - u_d\|_{L^2(\Omega)}^2 + \frac{\alpha}{2} \|z_h\|_{L^2(\Omega)}^2 + \frac{1}{2 \ga} \|(\hat{\mu}+\ga \boldsymbol{S}_h z_h)_+\|^2_{L^2(\Om)}.
	\end{equation}

The following result is a discrete analogue of Theorem \ref{thm:optcond} stating the first order optimality conditions for \eqref{eq:fullDisc}. 
\begin{theorem}\label{thm:discoptcond} 
	Let $\mathcal J_h^\ga :L^p(\Om) \rightarrow \mathbb{R}$ be continuously Fr\'echet differentiable, and $(\bar{u}_h^\ga,\bar{z}_h^\ga)$ be a solution to the optimization problem \eqref{eq:fullDisc}. There exists a Lagrange multiplier (adjoint variable) $\bar{\xi}_h^\ga \in U_h$ such that
		\begin{align*}
		 &\mathcal{E}(\bar{u}_h^\ga,v_h) = (\bar{z}_h^\ga,v_h)_{L^2(\Om)}  \quad &&\forall v_h \in U_h ,\\             
		&\mathcal{E}(\bar{\xi}_h^\ga,v_h) 
		= \left( \bar{u}_h^\ga-u_d , v_h \right)_{L^2(\Om)} 
		  +\left( (\hat{\mu}+\ga \bar{u}_h^\ga)_+ , v_h \right)_{L^2(\Om)}     
		 && \forall \;v_h \in U_h,\\
		&\big( \bar{\xi}_h^\ga + \alpha \bar{z}_h^\ga , z_h-\bar{z}_h^\ga
		\big)_{L^2(\Om)} \ge 0  && \forall \;z_h \in Z_{ad,h}.  
		\end{align*} 
\end{theorem}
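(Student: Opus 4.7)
The plan is to follow the standard Lagrange-multiplier argument for smooth constrained minimization over a convex set, with the only discrete-specific ingredients being (i) that all test functions must lie in $U_h$ and (ii) invocation of Lax–Milgram on that finite-dimensional space to produce $\bar\xi_h^\ga$.

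Since $\bar z_h^\ga$ minimizes $\mathcal{J}_h^\ga$ over the nonempty convex set $Z_{ad,h}$ and $\mathcal{J}_h^\ga$ is continuously Fr\'echet differentiable by hypothesis, the first-order necessary condition reads
\[
\bigl\langle (\mathcal{J}_h^\ga)'(\bar z_h^\ga),\, z_h - \bar z_h^\ga \bigr\rangle \ge 0, \qquad \forall z_h \in Z_{ad,h}.
\]
I would then compute the derivative via the chain rule: linearity of $\boldsymbol{S}_h$ yields $\boldsymbol{S}_h' = \boldsymbol{S}_h$, the tracking and Tikhonov terms are standard, and the Moreau–Yosida term contributes $((\hat{\mu} + \ga \bar u_h^\ga)_+, \boldsymbol{S}_h \delta z)_{L^2}$ because the scalar map $t \mapsto \tfrac{1}{2\ga}(t)_+^2$ is $C^{1,1}$ with derivative $(t)_+$. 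This yields
\[
\bigl\langle (\mathcal{J}_h^\ga)'(\bar z_h^\ga), \delta z \bigr\rangle = (\bar u_h^\ga - u_d, \boldsymbol{S}_h \delta z)_{L^2} + ((\hat{\mu} + \ga \bar u_h^\ga)_+, \boldsymbol{S}_h \delta z)_{L^2} + \alpha (\bar z_h^\ga, \delta z)_{L^2}.
\]

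Next, I introduce $\bar\xi_h^\ga \in U_h$ as the unique solution of the stated discrete adjoint equation, whose existence and uniqueness follow from Lax–Milgram applied to the symmetric, continuous, and coercive form $\mathcal{E}$ restricted to the finite-dimensional subspace $U_h \subset \widetilde{W}^{s,2}_0(\Om)$. To eliminate $\boldsymbol{S}_h \delta z$ from the derivative above, I would test the discrete adjoint equation with $v_h = \boldsymbol{S}_h \delta z \in U_h$ and the discrete state equation with source $\delta z$ with $v_h = \bar\xi_h^\ga \in U_h$; symmetry of $\mathcal{E}$ then produces
\[
(\bar u_h^\ga - u_d, \boldsymbol{S}_h \delta z)_{L^2} + ((\hat{\mu} + \ga \bar u_h^\ga)_+, \boldsymbol{S}_h \delta z)_{L^2} = (\bar\xi_h^\ga, \delta z)_{L^2}.
\]
Substituting back into the variational inequality gives the third stated condition, while the discrete state and adjoint equations produce the first two.

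There is no substantive obstacle here; the argument is the standard formal Lagrangian calculation carried out inside $U_h$, and every step reduces to finite-dimensional linear algebra together with the symmetry of $\mathcal{E}$. The only delicate point worth flagging is the Fr\'echet differentiability of $v \mapsto \tfrac{1}{2\ga}\|(\hat{\mu} + \ga v)_+\|_{L^2}^2$ on $L^p(\Om)$, which is assumed by hypothesis but ultimately rests on the $C^{1,1}$ regularity of $t \mapsto (t)_+^2$; this is what rigorously legitimizes the chain-rule computation of $(\mathcal{J}_h^\ga)'$ performed above.
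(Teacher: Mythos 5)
Your argument is correct and is exactly the standard derivation the paper has in mind: it omits the proof of this theorem as a discrete analogue of Theorem 3.1, which in turn follows the classical reduced-functional argument (variational inequality for the minimizer over the convex set, chain rule for the Moreau--Yosida term, adjoint state from Lax--Milgram, and symmetry of $\mathcal{E}$ to rewrite the derivative as $(\bar{\xi}_h^\ga + \alpha \bar{z}_h^\ga, z_h - \bar{z}_h^\ga)_{L^2(\Om)}$). No gaps; in the finite-dimensional space $U_h$ every step you flag (coercivity of $\mathcal{E}$ on $U_h$, differentiability of the penalty term) is indeed unproblematic.
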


In order to discuss the convergence of our discrete approximations to the actual solution, we assume that we have a sequence of meshes $\mathcal T_h$ and corresponding solutions spaces $U_h$ and $Z_{ad,h}$ that provide better approximations as $h \to 0$. The quantity we wish to study is $\|\bar{z} - \bar{z}_h^\gamma\|_{L^2(\Omega)}$, which we will split as 
\begin{equation} \label{eq:split}
\|\bar{z} - \bar{z}_h^\gamma\|_{L^2(\Omega)} \leq \|\bar{z} - \bar{z}^\gamma\|_{L^2(\Omega)} + \|\bar{z}^\gamma - \bar{z}_h^\gamma\|_{L^2(\Omega)}, 
\end{equation}
where $\bar{z}$ solves \eqref{eq:rpDir}, $\bar{z}^\gamma$ solves \eqref{eq:regrpDir}, and $\bar{z}_h^\gamma$ solves \eqref{eq:fullDisc}.  
The first term on the right-hand side in \eqref{eq:split} can be estimated by the decay of $(\bar{u}^\gamma)_+$ as seen in Theorem \ref{distance}.  As such, we present the following lemma regarding the approximation of $(\bar{u}^\gamma)_+$. 

\begin{lemma} \label{lem:uPlus}
Let $\bar{u}^\gamma = \boldsymbol{S}\bar{z}^\gamma$. 
Then we have 
\begin{equation}\label{eq:uGamPlusBound}
\|(\bar{u}^\gamma)_+\|_{C_0(\Om)} \leq C \big( \| (\bar{u}^\gamma)_+ - I_h(\bar{u}^\gamma)_+ \|_{C_0(\Om)} + \gamma^{-1/2} h^{-N/2}\big),
\end{equation}
where the constant is independent of $\gamma$ and $h$.
\end{lemma}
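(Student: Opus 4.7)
The plan is to exploit the inverse estimate on the finite-element space $U_h$ to convert an $L^\infty$-type norm into an $L^2$-norm, which can then be controlled by the a priori bound $\|(\bar u^\gamma)_+\|_{L^2(\Omega)}\le \omega(\gamma^{-1})\gamma^{-1/2}$ already established in \eqref{uplusbnd}. Since $(\bar u^\gamma)_+$ does not itself lie in $U_h$, I would add and subtract its nodal interpolant $I_h(\bar u^\gamma)_+\in U_h$; this isolates the interpolation-error term which is exactly the first summand on the right-hand side of the claimed estimate.

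Concretely, the first step is the triangle inequality
\[
\|(\bar u^\gamma)_+\|_{C_0(\Omega)}\le \|(\bar u^\gamma)_+ - I_h(\bar u^\gamma)_+\|_{C_0(\Omega)} + \|I_h(\bar u^\gamma)_+\|_{C_0(\Omega)}.
\]
For the second summand, because $\mathcal T_h$ is quasi-uniform and $I_h(\bar u^\gamma)_+$ is a piecewise linear finite-element function, the standard inverse inequality gives
\[
\|I_h(\bar u^\gamma)_+\|_{C_0(\Omega)}\le C\,h^{-N/2}\,\|I_h(\bar u^\gamma)_+\|_{L^2(\Omega)}.
\]
The remaining task is to estimate $\|I_h(\bar u^\gamma)_+\|_{L^2(\Omega)}$, which I would do by $L^2$-stability of the nodal interpolant on a quasi-uniform mesh, followed by the a priori bound \eqref{uplusbnd}. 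Note that $\omega(\gamma^{-1})$ remains bounded as $\gamma\to\infty$, thanks to $\bar z^\gamma\to\bar z$ in $L^2(\Omega)$ (quoted from \cite{hintermuller_kunisch}) and continuity of $\mathcal J$, so $\|(\bar u^\gamma)_+\|_{L^2(\Omega)}\le C\gamma^{-1/2}$ with a constant independent of $\gamma$. Combining the three estimates produces the claim.

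The delicate step is the $L^2$-stability of the Lagrange interpolant applied to $(\bar u^\gamma)_+$. One can either invoke the stability $\|I_hv\|_{L^2(\Omega)}\le C\|v\|_{L^2(\Omega)}$ that holds on quasi-uniform meshes for continuous functions with sufficient Sobolev regularity (here $(\bar u^\gamma)_+\in C_0(\Omega)\cap \widetilde{W}^{s,2}_0(\Omega)$, since the positive-part operator preserves membership in $H^s$ for $0<s<1$), or alternatively bound
\[
\|I_h(\bar u^\gamma)_+\|_{L^2(\Omega)}\le \|(\bar u^\gamma)_+\|_{L^2(\Omega)} + |\Omega|^{1/2}\,\|(\bar u^\gamma)_+ - I_h(\bar u^\gamma)_+\|_{C_0(\Omega)},
\]
absorbing the extra interpolation-error contribution into the first term of the stated estimate. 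Everything else in the argument is a routine application of standard finite-element tools together with the already-proved inequalities of Section~\ref{s:convergence}.
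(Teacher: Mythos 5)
Your argument is essentially the paper's proof: triangle inequality with the Lagrange interpolant, the inverse estimate $\|v_h\|_{C_0(\Omega)}\le C h^{-N/2}\|v_h\|_{L^2(\Omega)}$ on the quasi-uniform mesh, and the a priori bound \eqref{uplusbnd} (with $\omega(\gamma^{-1})$ bounded as $\gamma\to\infty$), in exactly that order. The paper simply writes $\|I_h(\bar u^\gamma)_+\|_{C_0(\Omega)}\le C h^{-N/2}\|(\bar u^\gamma)_+\|_{L^2(\Omega)}$ in one step, so your extra discussion of the interpolant's $L^2$-stability is additional care beyond what the paper records, not a departure from its method.
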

\begin{proof}
Let $I_h$ be the Lagrange interpolant, then using the inverse estimates (for example \cite{ErGu2004}) we have that
\begin{alignat*}{5}
\|(\bar{u}^\gamma)_+\|_{C_0(\Omega)} &\leq \|(\bar{u}^\gamma)_+ - I_h(\bar{u}^\gamma)_+\|_{C_0({\Omega})} + \|I_h(\bar{u}^\gamma)_+\|_{C_0({\Omega})}\\
& \leq C\big(\|(\bar{u}^\gamma)_+ - I_h(\bar{u}^\gamma)_+\|_{C_0({\Omega})} + h^{-N/2}\|(\bar{u}^\gamma)_+\|_{L^2(\Omega)}\big).
\end{alignat*}
The result follows by using \eqref{uplusbnd}.
\end{proof}

\begin{remark}
{\rm Notice that since $\bar{u}^\gamma \in C_0(\Om)$, therefore the interpolation error 
$\|(\bar{u}^\gamma)_+ - I_h(\bar{u}^\gamma)_+\|_{C_0(\Om)} \rightarrow 0$ as $h \rightarrow 0$. 
In the case that $\bar{u}^\gamma \in \widetilde{W}^{s,q}(\Om)$ for a sufficiently large $q$, for instance, 
$q > \frac{N}{s}$ so that $\widetilde{W}^{s,q}(\Om) \hookrightarrow C_0(\Om)$, we can 
bound the interpolation error in \eqref{eq:uGamPlusBound} as 
$\|(\bar{u}^\gamma)_+ - I_h(\bar{u}^\gamma)_+\|_{C_0(\Om)} \le C h^{s-\frac{N}{q}}$.
}
\end{remark}

Before we obtain our first result for the convergence of $\|\bar{z}^\gamma - \bar{z}_h^\gamma\|_{L^2(\Omega)}$, we introduce some notation to make the computations in what follows more tractable.  We define the quantity
\[
f(u) := u - u_d + (\hat{\mu} + \gamma u)_+, 
\]
so that the adjoint equation \eqref{eq:regb} becomes
\[
\mathcal{E} (\bar{\xi}^\gamma, v) = (f(\bar{u}^\gamma), v)_{L^2(\Om)}, \qquad \forall v \in \widetilde{W}^{s,2}_0(\Om). 
\]
Furthermore, we introduce the continuous and discrete adjoint problem operators as $R := \boldsymbol{S}^*$ and $R_h := \boldsymbol{S}_h^*$, and note that these operators are well defined, linear, and bounded.  Finally, we define $\Pi_h : L^2(\Omega) \to Z_{ad,h}$, to be the $L^2$-orthogonal projection.  With this we are ready to state our convergence results. In subsection~\ref{s:gdep} we shall first state the results for all $s$ but where the constants in convergence can be $\gamma$-dependent. In subsection~\ref{s:gindep}, we state the results for a range of $s$ where these constants are $\gamma$-independent.
%

\subsection{$\gamma$-Dependent Convergence and Error Bounds}
\label{s:gdep}

\begin{theorem}\label{thm:errBound1}
Let $\bar{z}^\ga$ and $\bar{z}_h^\ga$ denote the solution of \eqref{eq:regrpDir} and \eqref{eq:fullDisc} respectively with corresponding states $\bar{u}^\ga = \boldsymbol{S}\bar{z}^\gamma$ and $\bar{u}_h^\ga = \boldsymbol{S}_h \bar{z}^\gamma$.   Then the discretization error can be bounded as 
\begin{alignat}{4}
\|\bar{z}^\gamma - \bar{z}_h^\gamma\|_{L^2(\Omega)} \leq C  \frac{1 + \gamma}{\alpha}\Big(& \|(R - R_h)f(\boldsymbol S\bar{z}^\gamma)\|_{L^2(\Omega)} + \|(\boldsymbol S - \boldsymbol S_h) \bar{z}^\gamma\|_{L^2(\Omega)} \nonumber \\
& + \|\Pi_h \bar{\xi}^\gamma - \bar{\xi}^\gamma \|_{L^2(\Omega)} + \|\Pi_h \bar{z}^\gamma - \bar{z}^\gamma\|_{L^2(\Omega)}\Big),	\label{eq:errBound}
\end{alignat}
where $C$ is a positive constant independent of $\gamma$ and $h$.
\end{theorem}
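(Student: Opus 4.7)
The plan is to combine the continuous variational inequality \eqref{eq:regc} with its discrete counterpart from Theorem~\ref{thm:discoptcond}, decompose the resulting adjoint mismatch through an auxiliary adjoint evaluated at the continuous state, and close the estimate with Young's inequality to absorb the terms linear in $\|\bar z^\ga - \bar z_h^\ga\|_{L^2(\Om)}$. Throughout I abbreviate the four quantities on the right-hand side of \eqref{eq:errBound} by $T_1,T_2,T_3,T_4$ and set $E := \bar z^\ga - \bar z_h^\ga$ and $\tilde u_h^\ga := \boldsymbol S_h\bar z_h^\ga$ (the state at the discrete optimal control, distinct from the theorem's $\bar u_h^\ga = \boldsymbol S_h\bar z^\ga$).

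First, I would test \eqref{eq:regc} with $z = \bar z_h^\ga \in Z_{ad,h}\subset Z_{ad}$ and the discrete variational inequality with $z_h = \Pi_h\bar z^\ga \in Z_{ad,h}$ (its admissibility holding for the standard pointwise-constraint forms of $Z_{ad}$). Adding the two and exploiting the $L^2$-orthogonality $(\bar z_h^\ga, \Pi_h\bar z^\ga - \bar z^\ga)_{L^2(\Om)}=0$ (since $\bar z_h^\ga$ is piecewise constant), the quadratic $\alpha$-terms collapse to $-\alpha\|E\|^2_{L^2(\Om)}$, leaving
\begin{equation*}
\alpha\|E\|^2_{L^2(\Om)} \le (\bar\xi_h^\ga - \bar\xi^\ga, E)_{L^2(\Om)} + (\bar\xi_h^\ga, \Pi_h\bar z^\ga - \bar z^\ga)_{L^2(\Om)}.
\end{equation*}

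The map $f(u) = u - u_d + (\hat\mu+\ga u)_+$ is monotone with $(f(u)-f(v),u-v)_{L^2(\Om)} \ge \|u-v\|^2_{L^2(\Om)}$ and Lipschitz continuous on $L^2(\Om)$ with constant $1+\ga$. Using the self-adjoint representations $\bar\xi^\ga = R f(\bar u^\ga)$ and $\bar\xi_h^\ga = R_h f(\tilde u_h^\ga)$ afforded by the adjoint equations of Theorems~\ref{thm:optcond} and \ref{thm:discoptcond}, I split
\begin{equation*}
\bar\xi^\ga - \bar\xi_h^\ga = (R - R_h) f(\bar u^\ga) + R_h\bigl( f(\bar u^\ga) - f(\tilde u_h^\ga)\bigr).
\end{equation*}
Pairing with $E$ and using $\boldsymbol S_h E = \bar u_h^\ga - \tilde u_h^\ga$, the first summand contributes $T_1\|E\|_{L^2(\Om)}$ by Cauchy-Schwarz, while the second becomes $-(f(\bar u^\ga) - f(\tilde u_h^\ga),\bar u_h^\ga - \tilde u_h^\ga)_{L^2(\Om)}$; splitting $\bar u_h^\ga - \tilde u_h^\ga = (\bar u_h^\ga - \bar u^\ga) + (\bar u^\ga - \tilde u_h^\ga)$ lets monotonicity give a non-positive piece on the central slot, and the remaining cross term is bounded by $(1+\ga)(T_2 + C\|E\|_{L^2(\Om)})T_2$. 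For the projection mismatch I add and subtract $\bar\xi^\ga$ and use $(\bar\xi^\ga,\Pi_h\bar z^\ga - \bar z^\ga)_{L^2(\Om)}=(\bar\xi^\ga-\Pi_h\bar\xi^\ga,\Pi_h\bar z^\ga - \bar z^\ga)_{L^2(\Om)}$ to extract $T_3T_4$; the leftover $(\bar\xi_h^\ga - \bar\xi^\ga,\Pi_h\bar z^\ga - \bar z^\ga)_{L^2(\Om)}$ is bounded by $\|\bar\xi_h^\ga - \bar\xi^\ga\|_{L^2(\Om)}T_4 \le C\bigl(T_1 + (1+\ga)(T_2+\|E\|_{L^2(\Om)})\bigr)T_4$ via the same decomposition.

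Collecting the contributions and applying Young's inequality with a parameter tuned to $\alpha$ absorbs every $\|E\|_{L^2(\Om)}$-linear term into $\tfrac{\alpha}{2}\|E\|^2_{L^2(\Om)}$; after bounding each remaining product $T_iT_j$ by $T_i^2+T_j^2$, dividing by $\alpha$, and taking a square root, the estimate \eqref{eq:errBound} follows with the stated $(1+\ga)/\alpha$ prefactor. The main obstacle is precisely this $\ga$-dependence: the Lipschitz constant $1+\ga$ of the Moreau-Yosida regularizer $(\hat\mu+\ga u)_+$ is the sole origin of the blow-up, and removing it (as subsection~\ref{s:gindep} does for a restricted range of $s$) will require finer structural estimates beyond the generic Lipschitz bound used here.
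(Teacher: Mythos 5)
Your proposal is correct and follows essentially the same route as the paper's proof: the same pairing of the continuous and discrete variational inequalities with $z=\bar z_h^\gamma$ and $z_h=\Pi_h\bar z^\gamma$, the same adjoint representations $\bar\xi^\gamma=Rf(\boldsymbol S\bar z^\gamma)$ and $\bar\xi_h^\gamma=R_hf(\boldsymbol S_h\bar z_h^\gamma)$, the Lipschitz constant $1+\gamma$ of $f$, the $\Pi_h$-orthogonality trick yielding the $\|\Pi_h\bar\xi^\gamma-\bar\xi^\gamma\|\,\|\Pi_h\bar z^\gamma-\bar z^\gamma\|$ term, uniform boundedness of $\boldsymbol S_h$, $R_h$, and Young's inequality to absorb the $\|\bar z^\gamma-\bar z_h^\gamma\|$-linear terms. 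The only deviation is cosmetic: where the paper inserts the two auxiliary adjoints $R_hf(\boldsymbol S\bar z^\gamma)$ and $R_hf(\boldsymbol S_h\bar z^\gamma)$ and discards the third piece $(III)$ by a pointwise positive-part sign argument, you use a two-term split of $\bar\xi^\gamma-\bar\xi_h^\gamma$ and discard the corresponding piece via monotonicity of $f$, which is an equivalent (and arguably cleaner) way of seeing that it is nonpositive.
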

\begin{proof}
	In what follows all of the norms and inner products used are in $L^2(\Omega)$ unless noted otherwise.  Using the notation introduced above, we begin by defining the auxiliary variables
\begin{alignat*}{5}
\hat{u}_h^\ga  &:= \boldsymbol{S}_h \bar{z}^\gamma,\\
\hat{\xi}_h^\ga &: = R_h f(\bar{u}^\gamma) = R_h f(\boldsymbol{S}\bar{z}^\gamma),\\
\widetilde{\xi}_h^\ga &: = R_h f(\hat{u}_h^\gamma) = R_h f(\boldsymbol{S}_h \bar{z}^\gamma). 
\end{alignat*}
Next, we recall the continuous and discrete optimality conditions
\begin{alignat*}{3}
 (\bar{\xi}^\ga + \alpha \bar{z}^\ga, z-\bar{z}^\ga)
		 &\ge 0 \qquad \forall \;z \in Z_{ad},\\ 
(\bar{\xi}_h^\ga +\alpha \bar{z}_h^\ga, z_h-\bar{z}_h^\ga)
		&\ge 0 \qquad \forall \;z_h \in Z_{ad,h}.
\end{alignat*}
We then perform the following: (i) replace $z$ with $\bar{z}_h^\gamma$ in the first inequality and $z_h$ with $\Pi_h \bar{z}^\ga$ in the second inequality; (ii) add the two inequalities; (iii) introduce and rearrange terms appropriately to obtain
\begin{alignat*}{4}
\alpha \| \bar{z}^\gamma - \bar{z}_h^\gamma\|^2 &\leq (\bar{\xi}^\gamma - \bar{\xi}_h^\gamma, \bar{z}_h^\gamma - \bar{z}^\gamma)  &&+ (\bar{\xi}_h^\gamma + \alpha \bar{z}_h^\gamma, \Pi_h \bar{z}^\gamma - \bar{z}^\gamma)\\
& = (a) && + (b). 
\end{alignat*}
We can further separate ($a$) into
\begin{alignat}{5}
(\bar{\xi}^\gamma - \bar{\xi}_h^\gamma, \bar{z}_h^\gamma - \bar{z}^\gamma) &=  (\bar{\xi}^\gamma - \hat{\xi}_h^\gamma + \hat{\xi}_h^\gamma - \widetilde{\xi}_h^\gamma + \widetilde{\xi}_h^\gamma -  \bar{\xi}_h^\gamma, \bar{z}_h^\gamma - \bar{z}^\gamma)\nonumber\\
& =  (\bar{\xi}^\gamma - \hat{\xi}_h^\gamma, \bar{z}_h^\gamma - \bar{z}^\gamma)  +  (\hat{\xi}^\gamma - \widetilde{\xi}_h^\gamma, \bar{z}_h^\gamma - \bar{z}^\gamma) \nonumber\\
& \qquad  + (\widetilde{\xi}^\gamma - \bar{\xi}_h^\gamma, \bar{z}_h^\gamma - \bar{z}^\gamma) \nonumber \\
& = (I) + (II) + (III). \label{eq:decomp1}
\end{alignat}
Before we look at these three terms, we note that we can bound
\begin{equation} \label{eq:fIneq}
f(u) - f(v) = u-u_d + (\hat{\mu} + \gamma u)_+ - (v- u_d + (\hat{\mu} + \gamma v)_+) \leq u - v + \gamma (u - v)_+,
\end{equation}
since $(u)_+ - (v)_+ \leq (u - v)_+$.  Now we bound 
\begin{alignat*}{5}
		(I) &=  (R f(\bar{u}^\gamma)  - R_h f(\bar{u}^\gamma),  \bar{z}_h^\gamma - \bar{z}^\gamma) \leq C \|(R - R_h) f(\boldsymbol S \bar{z}^\gamma)\| \|\bar{z}_h^\gamma - \bar{z}^\gamma\|,\\
		(II)&=  (R_h f(\bar{u}^\gamma) - R_h f(\hat{u}_h^\gamma), \bar{z}_h^\gamma - \bar{z}^\gamma)\hspace{-1.5pt} \leq \hspace{-1.5pt} C (1 + \gamma) \|R_h(\boldsymbol{S} - \boldsymbol{S}_h) \bar{z}^\gamma\|\|\bar{z}_h^\gamma - \bar{z}^\gamma\|. 
\end{alignat*}
For $(III)$, we use \eqref{eq:fIneq} to obtain
\begin{alignat*}{4}
		(III) & =  (R_h f(\hat{u}_h^\gamma) - R_h f(\bar{u}_h^\gamma), \bar{z}_h^\gamma - \bar{z}^\gamma) \\
		      & \leq  \Big(R_h (\boldsymbol{S}_h (\bar{z}^\gamma - \bar{z}_h^\gamma) + \gamma (\boldsymbol{S}_h (\bar{z}^\gamma - \bar{z}_h^\gamma))_+), \bar{z}_h^\gamma - \bar{z}^\gamma \Big) .
	\end{alignat*}
From here, we note that $(R_h\boldsymbol S_h (\bar{z}^\gamma - \bar{z}_h^\gamma), \bar{z}_h^\gamma - \bar{z}^\gamma)  \leq 0$, and so we need only consider 
\begin{alignat*}{3}
(III) &\leq  \gamma (R_h( \boldsymbol{S}_h (\bar{z}^\gamma - \bar{z}_h^\gamma)_+), \bar{z}_h^\gamma - \bar{z}^\gamma) \\
& = \gamma  ((\boldsymbol{S}_h(\bar{z}^\gamma - \bar{z}_h^\gamma))_+ , \boldsymbol{S}_h(\bar{z}_h^\gamma - \bar{z}^\gamma)) \\
& = - \gamma  ((\boldsymbol{S}_h(\bar{z}^\gamma - \bar{z}_h^\gamma))_+ ,\boldsymbol{S}_h(\bar{z}^\gamma - \bar{z}_h^\gamma)) \\
& = \begin{cases} 
- \gamma \|\boldsymbol S_h (\bar{z}^\gamma - \bar{z}_h^\gamma )\|_{L^2(\Omega)}^2 & \mbox{ if }\boldsymbol S_h \bar{z}^\gamma - \bar{z}_h^\gamma \geq 0,\\
0 & \mbox{ if } \boldsymbol S_h \bar{z}^\gamma - \bar{z}_h^\gamma < 0.
\end{cases}
\end{alignat*}
In either of the cases above, we see that $(III)$ is non positive and so it may be removed from the bound. Now, turning our attention to ($b$), we note that it can be rewritten as $( \bar{\xi}_h^\gamma, \Pi_h \bar{z}^\gamma - \bar{z}^\gamma) $ since $\Pi_h$ is an orthogonal projection.  Using similar techniques on ($b$) as we did on ($a$), we have 
\begin{alignat}{5}
(b) & =  (\bar{\xi}_h^\gamma - \widetilde{\xi}_h^\gamma + \widetilde{\xi}_h^\gamma - \hat{\xi}_h^\gamma + \hat{\xi}_h^\gamma - \bar{\xi}^\gamma + \bar{\xi}^\gamma, \Pi_h \bar{z}^\gamma - \bar{z}^\gamma) \nonumber\\ 
& =  (\bar{\xi}^\gamma - \hat{\xi}_h^\gamma, \bar{z}^\gamma - \Pi_h \bar{z}^\gamma)   +  (\hat{\xi}^\gamma - \widetilde{\xi}_h^\gamma, \bar{z}^\gamma - \Pi_h \bar{z}^\gamma) \nonumber \\
& \qquad  +  (\widetilde{\xi}^\gamma - \bar{\xi}_h^\gamma, \bar{z}^\gamma - \Pi_h\bar{z}^\gamma)   +  (\Pi_h \bar{\xi}^\gamma - \bar{\xi}^\gamma, \bar{z}^\gamma - \Pi_h\bar{z}^\gamma) \nonumber\\
& = (i) + (ii) + (iii) + (iv) \label{eq:decomp2}. 
\end{alignat} 
The quantities $(i)$ and $(ii)$ can be bounded similarly to $(I)$ and $(II)$ above with the substitution of $(\bar{z}^\gamma - \Pi_h \bar{z}^\gamma)$ for $(\bar{z}_h^\gamma - \bar{z}^\gamma)$.  Focusing on the remaining terms we obtain
\begin{alignat*}{4}
(iii) & = (R_h f(\hat{u}_h^\gamma) - R_h f (\bar{u}_h^\gamma),  \bar{z}^\gamma - \Pi_h \bar{z}^\gamma)\\
&  \leq C(1+\gamma)\|R_h \boldsymbol S_h(\bar{z}^\gamma - \bar{z}_h^\gamma)\|\|\bar{z}^\gamma - \Pi_h \bar{z}^\gamma\|\\
(iv)  & \leq \|\Pi_h \bar{\xi}^\gamma  - \bar{\xi}^\gamma\| \|\bar{z}^\gamma - \Pi_h \bar{z}^\gamma\|.
\end{alignat*}
Before combining all of the bounds, we note that the operators $\boldsymbol{S}_h$, $R_h$ and $R_h \boldsymbol{S}_h$ are uniformly bounded in $h$, that is, there exists some $\hat{C}$ independent of $h$ such that 
\[
\|\boldsymbol{S}_h\| + \|R_h\| + \|R_h \boldsymbol{S}_h\| \leq \hat{C} \qquad \forall h, 
\]
where the above norms are the appropriate operator norms.
Now combining all of our bounds we have
\begin{alignat*}{5}
\alpha \|\bar{z}^\gamma - \bar{z}_h^\gamma\|^2 \leq C\Bigg(&\|(R - R_h) f(\boldsymbol S \bar{z}^\gamma)\| \big( \|\bar{z}_h^\gamma - \bar{z}^\gamma\| +  \|\bar{z}^\gamma - \Pi_h \bar{z}^\gamma\|\big) \\
& + (1 + \gamma)\hat{C} \|(\boldsymbol{S} - \boldsymbol{S}_h) \bar{z}^\gamma\|\big(\|\bar{z}_h^\gamma - \bar{z}^\gamma\| +  \|\bar{z}^\gamma - \Pi_h \bar{z}^\gamma\|\big)\\
& + (1+\gamma) \hat{C}\|\bar{z}^\gamma - \bar{z}_h^\gamma\|\|\bar{z}^\gamma - \Pi_h \bar{z}^\gamma\|\\
& + \|\Pi_h \bar{\xi}^\gamma  - \bar{\xi}^\gamma\| \|\bar{z}^\gamma - \Pi_h \bar{z}^\gamma\|\Bigg).
\end{alignat*}
Next, we make use of Young's inequality with constant $\alpha/(2C)$ to obtain the bound 
\begin{alignat*}{3}
\alpha \|\bar{z}^\gamma - \bar{z}_h^\gamma\|^2  \leq C &\Bigg(\frac{C}{\alpha}\|(R-R_h) f(\boldsymbol{S} \bar{z}^\gamma)\|^2  + \frac{(1 + \gamma)^2\hat{C}^2C}{\alpha}\|(\boldsymbol{S} - \boldsymbol{S}_h) \bar{z}^\gamma\|^2\\ 
&  + \frac{C}{\alpha} \|\Pi_h \bar{\xi}^\gamma  - \bar{\xi}^\gamma\|^2 + \left(\frac{3\alpha}{4C} + \frac{(1+\gamma)^2 \hat{C}^2 C}{\alpha}\right) \|\Pi_h \bar{z}^\gamma - \bar{z}^\gamma\|^2 \\
& + \frac{3\alpha}{4C} \|\bar{z}_h^\gamma - \bar{z}^\gamma\|^2\Bigg).
\end{alignat*}
Grouping terms with our quantity of interest on the left and dividing by the resulting constant leads to \eqref{eq:errBound}. 
\end{proof}

We now use \eqref{eq:split} and combine  \eqref{eq:distance}, \eqref{eq:uGamPlusBound}, and \eqref{eq:errBound} to obtain the overall regularization and discretization error bound. 

\begin{corollary}
	Under the assumptions of Theorem~\ref{thm:errBound1} and for a fixed $\gamma$, we have  
	$\bar{z}^\gamma_h \rightarrow \bar{z}^\gamma$ as $h \rightarrow 0$. 
	Let 
	\[
		Z_{ad} = \{z \in Z: a \leq z \leq b \; a.e. \; in \; \Omega\}, 
	\] 
	where $a , b \in W^{s,2}(\Omega)$, with $a < b$ a.e. in $\Omega$, be given. 
	If $\Omega$ is $C^\infty$, then we get the following rate of convergence 
	\begin{alignat*}{5}
		\|\bar{z}^\gamma - \bar{z}_h^\gamma\|_{L^2(\Omega)} \leq  C \frac{1 + \gamma}{\alpha}\Big(&( h^\beta + h^s) (\|\bar{z}^\gamma\|_{L^2(\Omega)} + \|u_d\|_{L^2(\Omega)} + \|\hat{\mu}\|_{L^2(\Omega)})\\
		 &+ (h^\lambda + h^s) \|\bar{z}^\gamma\|_{W^{s,2}(\Omega)}\Big),
	\end{alignat*}
where $\beta = \min\{2s, 1 - \varepsilon\}$ and $\lambda = \min\{3s, s+ 1/2 - \varepsilon, 1 - \varepsilon\}$. 
\end{corollary}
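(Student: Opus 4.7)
The plan is to start from the estimate \eqref{eq:errBound} produced by Theorem~\ref{thm:errBound1} and to bound the four right-hand-side terms individually. The unconditional convergence $\bar z_h^\gamma \to \bar z^\gamma$ as $h\to 0$ for fixed $\gamma$ is then essentially immediate: since $\bar z^\gamma,\bar \xi^\gamma, f(\bar u^\gamma)\in L^2(\Omega)$, the strong convergence $\boldsymbol S_h\to \boldsymbol S$ and $R_h\to R$ on $L^2(\Omega)$, together with $\Pi_h\to I$ on $L^2(\Omega)$, force every summand on the right of \eqref{eq:errBound} to vanish, regardless of the particular structure of $Z_{ad}$.

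For the quantitative part I would first note that
\[
f(\bar u^\gamma)=\bar u^\gamma-u_d+(\hat\mu+\gamma \bar u^\gamma)_+
\]
lies in $L^2(\Omega)$, with $\|f(\bar u^\gamma)\|_{L^2}\lesssim \|\bar z^\gamma\|_{L^2}+\|u_d\|_{L^2}+\|\hat\mu\|_{L^2}$ by continuity of $\boldsymbol S:L^2\to L^2$ and the Lipschitz property of the positive part. Hence the terms $\|(R-R_h)f(\boldsymbol S\bar z^\gamma)\|_{L^2}$ and $\|(\boldsymbol S-\boldsymbol S_h)\bar z^\gamma\|_{L^2}$ are finite element $L^2$-errors for the Dirichlet fractional Poisson problem with $L^2$ data. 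On the smooth domain $\Om$, the exact solution belongs to $\widetilde W_0^{s+1/2-\varepsilon,2}(\Om)$; combining this regularity with an Aubin--Nitsche duality argument on piecewise linear elements supplies the rate $h^\beta$ with $\beta=\min\{2s,1-\varepsilon\}$, which, multiplied by the $L^2$-bound on $f(\bar u^\gamma)$, produces the first summand of the claimed estimate.

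For the remaining two terms I would first upgrade the regularity of the adjoint state. Equation \eqref{eq:regb} is a fractional Poisson problem with the $L^2$-datum $f(\bar u^\gamma)$, so on a $C^\infty$ domain the smooth-data regularity theory places $\bar\xi^\gamma$ in a Sobolev space of order $\min\{2s,s+1/2-\varepsilon\}$ with norm controlled by $\|f(\bar u^\gamma)\|_{L^2}$. The standard $L^2$-projection estimate $\|\Pi_h w-w\|_{L^2}\lesssim h^r\|w\|_{W^{r,2}}$ then yields the $h^\lambda$ term with $\lambda=\min\{3s,s+1/2-\varepsilon,1-\varepsilon\}$ once a further duality gain of $s$ orders is incorporated. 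For $\|\Pi_h\bar z^\gamma-\bar z^\gamma\|_{L^2}$, I would use the projection identity $\bar z^\gamma=\mathrm{proj}_{[a,b]}(-\bar\xi^\gamma/\alpha)$ coming from \eqref{eq:regc} under box constraints. Together with the pointwise Lipschitz property of $\mathrm{proj}_{[a,b]}$ and Stampacchia-type invariance of $W^{s,2}(\Om)$ under truncation, this transfers the $W^{s,2}$-regularity of $a,b,\bar\xi^\gamma$ to $\bar z^\gamma$, giving $\|\Pi_h\bar z^\gamma-\bar z^\gamma\|_{L^2}\lesssim h^s\|\bar z^\gamma\|_{W^{s,2}}$ and thereby the two $h^s$ contributions in the final bound.

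The principal obstacle is the correct calibration of the exponents $\beta$ and $\lambda$: one must reconcile the intrinsic regularity ceiling of $(-\Delta)_D^s$ on a smooth domain (at most $W^{s+1/2-\varepsilon,2}$, no matter how regular the data), the duality gain in the $L^2$-error, and the $\mathcal O(h)$ approximation ceiling of piecewise linear finite elements; these conspire to produce the somewhat delicate minima that appear in $\beta$ and $\lambda$. A secondary, more technical point is the Stampacchia chain-rule argument needed to transfer $W^{s,2}$-regularity through the obstacle projection onto $[a(x),b(x)]$; this is routine for $s<1$ but has to be spelled out to justify using $\|\bar z^\gamma\|_{W^{s,2}}$ in the final estimate.
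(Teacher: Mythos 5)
Your overall route is the same as the paper's: start from \eqref{eq:errBound}, get $\bar{\xi}^\gamma\in \widetilde{W}_0^{s,2}(\Omega)$ from the adjoint equation, use the projection formula $\bar{z}^\gamma=\mathbb{P}_{Z_{ad}}(-\alpha^{-1}\bar{\xi}^\gamma)$ together with the stability of truncation in $W^{s,2}$ to get $\bar{z}^\gamma\in W^{s,2}(\Omega)$, and then invoke the finite element error estimates on smooth domains plus $L^2$-projection estimates. However, there is a genuine flaw in where you get the exponent $\lambda$. In the paper, $\lambda=\min\{3s,\,s+1/2-\varepsilon,\,1-\varepsilon\}$ comes from the term $\|(\boldsymbol{S}-\boldsymbol{S}_h)\bar{z}^\gamma\|_{L^2(\Omega)}$: because the datum $\bar{z}^\gamma$ lies in $W^{s,2}(\Omega)$, the state has regularity $\min\{3s,\,s+1/2-\varepsilon\}$ on a $C^\infty$ domain, and the Galerkin $L^2$-error estimate then yields $h^\lambda\|\bar{z}^\gamma\|_{W^{s,2}(\Omega)}$. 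You instead bound that term with only the $L^2$-data rate $h^\beta\|\bar{z}^\gamma\|_{L^2(\Omega)}$ (acceptable, since it is dominated by the first group in the claimed bound) and try to recover $h^\lambda$ from $\|\Pi_h\bar{\xi}^\gamma-\bar{\xi}^\gamma\|_{L^2(\Omega)}$ via ``a further duality gain of $s$ orders.'' That step fails: Aubin--Nitsche duality applies to Galerkin errors, not to the $L^2$-orthogonal projection onto the piecewise-constant control space, and the projection error is limited to $h^{\min\{r,1\}}\|\bar{\xi}^\gamma\|_{W^{r,2}(\Omega)}$ with $r$ at most $\min\{2s,\,s+1/2-\varepsilon\}$ (the datum of \eqref{eq:regb} contains $(\hat\mu+\gamma\bar{u}^\gamma)_+$ and is only $L^2$); for small $s$ this cannot produce $3s$, and the resulting factor would in any case multiply $\|f(\bar{u}^\gamma)\|_{L^2(\Omega)}$ rather than $\|\bar{z}^\gamma\|_{W^{s,2}(\Omega)}$.

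The damage is localized: the corollary's inequality still follows if you replace the spurious duality step by the straightforward bound $\|\Pi_h\bar{\xi}^\gamma-\bar{\xi}^\gamma\|_{L^2(\Omega)}\le C h^{s}\|\bar{\xi}^\gamma\|_{W^{s,2}(\Omega)}\le C h^{s}\|f(\bar{u}^\gamma)\|_{L^2(\Omega)}$, which is exactly what the paper does (and $h^s$, $h^\beta$ are already present in the stated right-hand side). Two smaller points: your claim that the solution with $L^2$ data lies in $\widetilde{W}^{s+1/2-\varepsilon,2}$ overstates the regularity when $s<1/4$ (the correct order is $\min\{2s,\,s+1/2-\varepsilon\}$), although the rate $\beta=\min\{2s,1-\varepsilon\}$ you quote is the right one; and to match the theorem as stated (with the $h^\lambda\|\bar{z}^\gamma\|_{W^{s,2}(\Omega)}$ term actually realized) you should apply the improved-regularity FEM estimate to $\|(\boldsymbol{S}-\boldsymbol{S}_h)\bar{z}^\gamma\|_{L^2(\Omega)}$, which is precisely where the $W^{s,2}$-regularity of the control obtained from the projection formula is needed.
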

\begin{proof}
	We first comment on the regularity of $\bar{z}^\gamma, \bar{u}^\gamma$, and $\bar{\xi}^\gamma$.  Due to $\hat{\mu}$, the right-hand side of \eqref{eq:regb} cannot be more regular than $L^2(\Omega)$, and therefore we have $\bar{\xi}^\gamma \in \widetilde{W}_0^{s,2}(\Omega)$. The optimality condition \eqref{eq:regc} is equivalent to 
	\begin{equation}\label{eq:proj_f}
		\bar{z}^\gamma = \mathbb{P}_{Z_{ad}} \left(-\alpha^{-1} \bar{\xi}^\gamma \right) 
	\end{equation}
where $\mathbb{P}_{Z_{ad}}$ is the projection onto the set $Z_{ad}$. Since $\bar{\xi}^\gamma \in \widetilde{W}_0^{s,2}(\Omega)$, therefore $\bar{\xi}^\gamma|_{\Omega} \in W^{s,2}(\Omega)$. Then using the projection formula \eqref{eq:proj_f}, in conjunction with \cite{War}, we obtain that $\bar{z}^\gamma \in W^{s,2}(\Omega)$. 

	Since $\Omega$ is assumed to have $C^\infty$ boundary, then by \cite{GGrubb_2015a} we have $\bar{u}^\gamma \in W^{\nu,2}(\Omega)$ where $\nu = \min\{2s, s + 1/2 - \varepsilon\}$.  Now for a fixed $\gamma$, we use \cite[Proposition 3.8]{BoBoNoOtSa2018} and interpolation estimates 
	to obtain
\begin{alignat*}{4}
\|\bar{z}^\gamma - \bar{z}_h^\gamma\|_{L^2(\Omega)} \leq C  \frac{1 + \gamma}{\alpha}\Big(& \|(R - R_h)f(\boldsymbol S\bar{z}^\gamma)\|_{L^2(\Omega)} + \|(\boldsymbol S - \boldsymbol S_h) \bar{z}^\gamma\|_{L^2(\Omega)} \nonumber \\
	& + \|\Pi_h \bar{\xi}^\gamma - \bar{\xi}^\gamma \|_{L^2(\Omega)} + \|\Pi_h \bar{z}^\gamma - \bar{z}^\gamma\|_{L^2(\Omega)}\Big)\\
\leq C \frac{1 + \gamma}{\alpha}\Big(& h^\beta \|f(\boldsymbol S\bar{z}^\gamma)\|_{L^2(\Omega)} + h^\lambda \|\bar{z}^\gamma\|_{W^{s,2}(\Omega)} \\
	& + h^s\|f(\boldsymbol S \bar{z}^\gamma)\|_{L^2(\Omega)} + h^s\|\bar{z}^\gamma\|_{W^{s,2}(\Omega)}\Big)\\
 \leq C \frac{1 + \gamma}{\alpha}\Big(&( h^\beta + h^s) (\|\bar{z}^\gamma\|_{L^2(\Omega)} + \|u_d\|_{L^2(\Omega)} + \|\hat{\mu}\|_{L^2(\Omega)})\\
	& + (h^\lambda + h^s) \|\bar{z}^\gamma\|_{W^{s,2}(\Omega)}\Big),
\end{alignat*}
where $\beta = \min\{2s, 1 - \varepsilon\}$, and $\lambda = \min\{3s, s+ 1/2 - \varepsilon, 1 - \varepsilon\}$. 
\end{proof}


\subsection{$\gamma$-Independent Convergence}
\label{s:gindep}

Next, we present a convergence result in $h$ which is independent of $\gamma$.
First we present a lemma that shows that we can bound the $L^1(\Omega)$ norm of $(\widehat{\mu} + \gamma \bar{u}^\gamma)_+$ and $(\widehat{\mu} + \gamma \bar{u}_h^\gamma)_+$ independent of $\gamma$.  The proof follows a similar argument as \cite[Lemma 3.4]{HiHi2009}, but we include it in full as there are some needed modifications due to the fact that, in addition to the state constraints, we also have the control constraints. Also the regularity results from the classical case ($s=1$), do not carry over to the fractional case.

\begin{lemma}\label{lem:L1bound}
Let $(\bar{z}^\gamma, \bar{u}^\gamma)$ and $(\bar{z}_h^\gamma, \bar{u}_h^\gamma)$ be solutions (controls and states) to \eqref{eq:regrpDir} and \eqref{eq:fullDisc} respectively.  There exists a positive constant $\widetilde{C}$ independent of $\gamma $ and $h$ such that 
\[
\max\left\{\|(\hat{\mu} + \gamma \bar{u}^\gamma)_+\|_{L^1(\Omega)}, \|(\hat{\mu} + \gamma \bar{u}_h^\gamma)_+\|_{L^1(\Omega)}\right\} \leq \widetilde{C}. 
\]
\end{lemma}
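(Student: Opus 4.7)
The plan is to adapt the strategy of \cite[Lemma~3.4]{HiHi2009} to the fractional/control-constrained setting along the same lines as the continuous argument in Theorem~\ref{distance}. The starting point is the Slater condition (Assumption~\ref{ass:data_compatibility}) together with the fact that $\widehat{u}=S\widehat{z}\in C_0(\overline{\Omega})$; by compactness of $\overline{\Omega}$ there is a constant $\sigma>0$ with $\widehat{u}(x)\le -\sigma$ on $\overline{\Omega}$. I would then test the optimality inequality \eqref{eq:regc} with $z=\widehat{z}$ to get $(\bar{\xi}^\gamma,\bar{z}^\gamma-\widehat{z})_{L^2(\Omega)}\le \alpha(\bar{z}^\gamma,\widehat{z}-\bar{z}^\gamma)_{L^2(\Omega)}$, and test the adjoint equation \eqref{eq:regb} with $v=\bar{u}^\gamma-\widehat{u}\in\widetilde{W}^{s,2}_0(\Omega)$. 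Using symmetry of $\mathcal E$ and the two state equations \eqref{eq:Sd} and \eqref{eq:rega} to rewrite $\mathcal E(\bar{\xi}^\gamma,\bar{u}^\gamma-\widehat{u})=(\bar{z}^\gamma-\widehat{z},\bar{\xi}^\gamma)_{L^2(\Omega)}$, these two relations combine to
\[
\bigl((\hat{\mu}+\gamma\bar{u}^\gamma)_+,\bar{u}^\gamma-\widehat{u}\bigr)_{L^2(\Omega)} \le \alpha(\bar{z}^\gamma,\widehat{z}-\bar{z}^\gamma)_{L^2(\Omega)} -(\bar{u}^\gamma-u_d,\bar{u}^\gamma-\widehat{u})_{L^2(\Omega)},
\]
whose right-hand side is bounded by a $\gamma$-independent constant thanks to \eqref{regsolbound}.

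The crux is lower bounding the left-hand side by $c\,\|(\hat{\mu}+\gamma\bar{u}^\gamma)_+\|_{L^1(\Omega)}$ modulo $\gamma$-uniform terms. I would split
\[
\bigl((\hat{\mu}+\gamma\bar{u}^\gamma)_+,\bar{u}^\gamma-\widehat{u}\bigr)_{L^2(\Omega)} = \bigl((\hat{\mu}+\gamma\bar{u}^\gamma)_+,-\widehat{u}\bigr)_{L^2(\Omega)} + \bigl((\hat{\mu}+\gamma\bar{u}^\gamma)_+,\bar{u}^\gamma\bigr)_{L^2(\Omega)}.
\]
The first summand is at least $\sigma\|(\hat{\mu}+\gamma\bar{u}^\gamma)_+\|_{L^1(\Omega)}$ because $-\widehat u\ge\sigma$. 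For the second, the integrand vanishes off $\Omega_\gamma^+(\hat{\mu})$ (cf.~\eqref{eq:4f}); on $\Omega_\gamma^+(\hat{\mu})$ it equals $\hat{\mu}\bar{u}^\gamma+\gamma(\bar{u}^\gamma)^2$. A case split on the sign of $\bar{u}^\gamma$ — using $\bar{u}^\gamma>-\hat{\mu}/\gamma$ on this set, exactly as in \eqref{eq:4e} — yields the pointwise bound $\hat{\mu}\bar{u}^\gamma+\gamma(\bar{u}^\gamma)^2 \ge -\hat{\mu}^2/\gamma$, hence the integral is at least $-\gamma^{-1}\|\hat{\mu}\|_{L^2(\Omega)}^2$. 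Putting everything together and absorbing $\gamma^{-1}\|\hat{\mu}\|^2_{L^2(\Omega)}$ into the constant gives the continuous bound $\|(\hat{\mu}+\gamma\bar{u}^\gamma)_+\|_{L^1(\Omega)}\le \widetilde C$.

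For the discrete estimate the argument transfers nearly verbatim provided one has a discrete Slater point. I would take $\widehat{z}_h\in Z_{ad,h}$ (either $\widehat z$ itself when admissible, or an $L^2$-projection/quasi-interpolant that preserves $Z_{ad,h}$), set $\widehat u_h=\boldsymbol S_h\widehat z_h$, and invoke the $L^\infty$-convergence $\widehat{u}_h\to\widehat u$ to conclude $\widehat u_h\le -\sigma/2$ for $h$ sufficiently small. Then I repeat steps involving the discrete optimality system from Theorem~\ref{thm:discoptcond}, testing with $v_h=\bar u_h^\gamma-\widehat u_h\in U_h$; the sign/size argument for $((\hat{\mu}+\gamma\bar{u}^\gamma_h)_+,\bar{u}^\gamma_h)_{L^2(\Omega)}$ is identical since it uses only pointwise properties of the positive part. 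The main obstacle I anticipate is precisely this discrete Slater step: unlike the classical case, uniform ($L^\infty$) convergence of $\widehat{u}_h$ for the fractional Laplacian is not a one-line citation and needs the regularity and interpolation estimates already invoked in Lemma~\ref{lem:uPlus} (and in the corollary following Theorem~\ref{thm:errBound1}). Controlling the non–sign-definite term $((\hat{\mu}+\gamma\bar{u}^\gamma)_+,\bar{u}^\gamma)$ purely from the definition of $\Omega_\gamma^+(\hat{\mu})$ is the other delicate point, but the continuous case gives the recipe.
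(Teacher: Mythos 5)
Your proposal is correct and follows essentially the same route as the paper: a Slater point $\hat u\le-\tau$, the variational inequality \eqref{eq:regc} tested with $\hat z$, the adjoint and state equations \eqref{eq:regb}--\eqref{eq:rega}, the set $\Omega_\gamma^+(\hat\mu)$ with the pointwise bound $\bar u^\gamma>-\hat\mu/\gamma$, and the uniform bounds \eqref{regsolbound}; your single test function $v=\bar u^\gamma-\hat u$ is just a compressed bookkeeping of the paper's chain of inequalities. For the discrete Slater step you flag as the main obstacle, the paper does exactly what you suggest: it takes $\Pi_h\hat z\in Z_{ad,h}$, defines $\hat u_h=\boldsymbol S_h\Pi_h\hat z$, and gets uniform convergence from the $L^\infty$-in-terms-of-$L^p$ stability estimate \cite[Proposition~3.2]{HAntil_DVerma_MWarma_2019b} together with density of piecewise constants in $L^p(\Omega)$, so that $\hat u_h\le-\hat\tau<0$ for $h$ small and the continuous argument repeats verbatim.
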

\begin{proof}
Let $\hat{z} \in Z_{ad}$ and $\hat{u} := \boldsymbol S\hat{z}$ satisfy the Slater condition such that
\[
\hat{u}(x)  \leq -\tau \qquad \forall x \in \overline{\Omega}, 
\]
where $\tau >0$ is a constant.  Using this we obtain 
\begin{alignat*}{4}
\int_\Omega (\hat{\mu} + \gamma \bar{u}^\gamma)_+ \tau \leq & - \int_\Omega (\hat{\mu} + \gamma \bar{u}^\gamma)_+ \hat{u} \\
= &-\mathcal E(\bar{\xi}^\gamma, \hat{u}) + (\bar{u}^\gamma - u_d, \hat{u})_{L^2(\Omega)} \tag{by \eqref{eq:regb}}\\
 = &-(\bar{\xi}^\gamma, \hat{z})_{L^2(\Omega)} +  (\bar{u}^\gamma - u_d, \hat{u})_{L^2(\Omega)} \tag{by \eqref{eq:rega}}\\
 \leq &-(\bar{\xi}^\gamma, \bar{z}^\gamma)_{L^2(\Omega)} - \alpha \|\bar{z}^\gamma\|_{L^2(\Omega)}^2 \\
 &+ \alpha( \bar{z}^\gamma, \hat{z})_{L^2(\Omega)} + (\bar{u}^\gamma - u_d, \hat{u})_{L^2(\Omega)} \tag{by \eqref{eq:regc}}\\
\leq  &-(\bar{\xi}^\gamma, \bar{z}^\gamma)_{L^2(\Omega)} + \alpha( \bar{z}^\gamma, \hat{z})_{L^2(\Omega)} + (\bar{u}^\gamma - u_d, \hat{u})_{L^2(\Omega)}\\
= &- \mathcal E(\bar{\xi}^\gamma, \bar{u}^\gamma) + \alpha( \bar{z}^\gamma, \hat{z})_{L^2(\Omega)} + (\bar{u}^\gamma - u_d, \hat{u})_{L^2(\Omega)} \tag{by \eqref{eq:rega}}\\
 = &- ((\hat{\mu} + \gamma \bar{u}^\gamma)_+, \bar{u}^\gamma)_{L^2(\Omega)} + (\bar{u}^\gamma - u_d, \bar{u}^\gamma)_{L^2(\Omega)} \\
& + \alpha( \bar{z}^\gamma, \hat{z})_{L^2(\Omega)} + (\bar{u}^\gamma - u_d, \hat{u})_{L^2(\Omega)} \tag{by \eqref{eq:regb}}.
\end{alignat*}
Recalling the definition of $\Omega_\gamma^+(\hat{\mu})$ from \eqref{eq:4f}, we have 
\begin{alignat*}{4}
\int_\Omega (\hat{\mu} + \gamma \bar{u}^\gamma)_+ \tau \leq & - \int_{\Omega_\gamma^+(\hat{\mu})} (\hat{\mu} + \gamma \bar{u}^\gamma) \bar{u}^\gamma + \alpha(\bar{z}^\gamma,  \hat{z})_{L^2(\Omega)}\\
& + (\bar{u}^\gamma - u_d, \hat{u} + \bar{u}^\gamma)_{L^2(\Omega)}\\
=&-\int_{\Omega_\gamma^+(\hat{\mu})} \hat{\mu}\bar{u}^\gamma - \gamma \|\bar{u}^\gamma\|_{L^2(\Omega_\gamma^+(\hat{\mu}))}^2 + \alpha(\bar{z}^\gamma,  \hat{z})_{L^2(\Omega)}  \\
& + (\bar{u}^\gamma - u_d, \hat{u} + \bar{u}^\gamma)_{L^2(\Omega)}\\
\leq & -\int_{\Omega_\gamma^+(\hat{\mu})} \hat{\mu}\bar{u}^\gamma + \alpha(\bar{z}^\gamma,  \hat{z})_{L^2(\Omega)}  + (\bar{u}^\gamma - u_d, \hat{u} + \bar{u}^\gamma)_{L^2(\Omega)}.
\end{alignat*}
The $L^1$ bound for the left-hand-side follows from the fact that we can bound all of the terms on the right-hand-side of the above inequality independent of $\gamma$ using \eqref{regsolbound}. 

To obtain the $L^1$ bound on $(\hat{\mu} + \gamma \bar{u}_h^\gamma)_+$, we take $\hat{z}$ and $\hat{u}$ as before, again use $\Pi_h$ as the $L^2$-orthogonal projection onto $Z_{ad,h}$, and let $\hat{u}_h$ be the solution of 
\[
\mathcal E(\hat{u}_h, v_h) = (\Pi_h \hat{z}, v_h)_{L^2(\Omega)} \qquad \qquad \forall v_h \in U_h, 
\]
so that 
\[
\mathcal E(\hat{u} - \hat{u}_h, v_h) = (\hat{z} - \Pi_h \hat{z}, v_h)_{L^2(\Omega)} \qquad \qquad \forall v_h \in U_h. 
\]
From \cite[Proposition 3.2]{HAntil_DVerma_MWarma_2019b}, we now have the bound
\[
\|\hat{u} - \hat{u}_h\|_{L^\infty(\Omega)} \leq C \|\hat{z} - \Pi_h \hat{z}\|_{L^p(\Omega)} \to 0 \qquad \text{as } h \to 0,
\]
and hence $\hat{u}_h$ converges to $\hat{u}$ uniformly.  Note that the convergence of $\|\hat{z} - \Pi_h \hat{z}\|_{L^p(\Omega)}$ follows from the density of piecewise constant functions in $L^p(\Omega)$, (every $f$ in $L^p(\Omega)$ can be written as the limit of a sequence of simple functions).  Therefore we have that $\hat{u}_h \leq  -  \hat{\tau}$ with $\hat{\tau} \leq \tau$ and we can use the same argument above using the discrete optimal solution $(\bar{z}_h^\gamma, \bar{u}_h^\gamma, \bar{\xi}_h^\gamma)$ and $\Pi_h\hat{z}$ instead of $\hat{z}$ to obtain the desired bound. 
\end{proof}  

We now present a convergence estimate for which the discrete control converges to the continuous control where all the terms (including constants) in the estimate remain bounded as $\gamma \to \infty$.
\begin{theorem}\label{thm:errBound2}
Assume that $\Omega$ has a $C^\infty$ boundary, $Z = L^2(\Omega)$ and therefore 
	$s> N/4$.  
	Let $\bar{z}^\ga$ and $\bar{z}_h^\ga$ denote the solution of \eqref{eq:regrpDir} and \eqref{eq:fullDisc} respectively with corresponding states $\bar{u}^\ga = \boldsymbol{S}\bar{z}^\gamma$ and $\bar{u}_h^\ga = \boldsymbol{S}_h \bar{z}^\gamma$.   Then the discretization error can be bounded as 
\begin{alignat*}{4}
\|\bar{z} - \bar{z}_h^\gamma\|_{L^2(\Omega)} \leq C&\bigg(\| (\bar{u}^\gamma)_+ - I_h(\bar{u}^\gamma)_+ \|_{C_0(\Om)} + \gamma^{-1/2} h^{-N/2}\\
& + \|R-R_h\|_{L^1 \to L^2}   + h^\beta\|\bar{z}^\gamma\|_{L^2(\Omega)}\\
& + \|(\hat{\mu} + \gamma \boldsymbol S \bar{z}^\gamma)_+ - (\hat{\mu} + \gamma \boldsymbol S_h \bar{z}^\gamma)_+ \|_{L^1(\Omega)}\\
& +  \|\bar{z}^\gamma - \Pi_h \bar{z}^\gamma\|_{L^2(\Omega)} +  \|\bar{z}^\gamma - \Pi_h \bar{z}^\gamma\|_{L^2(\Omega)}^{1/2}\bigg), 
\end{alignat*}
where the positive constant $C$ is independent of $\gamma$ and $h$, and $\beta = \min\{2s, 1-\varepsilon\}$. 
\end{theorem}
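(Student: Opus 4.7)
The plan is to split
\[
\|\bar{z} - \bar{z}_h^\gamma\|_{L^2(\Omega)} \le \|\bar{z} - \bar{z}^\gamma\|_{L^2(\Omega)} + \|\bar{z}^\gamma - \bar{z}_h^\gamma\|_{L^2(\Omega)}
\]
by the triangle inequality and to estimate the two summands so that every constant is genuinely independent of $\gamma$. For the regularization gap $\|\bar{z} - \bar{z}^\gamma\|_{L^2(\Omega)}$ I would apply Theorem~\ref{distance}; since $\bar{\mu} \ge 0$ and $\bar{u}^\gamma \in C_0(\Omega)$, one has $\langle \bar{\mu}, \bar{u}^\gamma\rangle_{\mathcal{M}(\Omega),C_0(\Omega)} \le \|\bar{\mu}\|_{\mathcal{M}(\Omega)}\, \|(\bar{u}^\gamma)_+\|_{C_0(\Omega)}$, and Lemma~\ref{lem:uPlus} converts $\|(\bar{u}^\gamma)_+\|_{C_0(\Omega)}$ into $\|(\bar{u}^\gamma)_+ - I_h(\bar{u}^\gamma)_+\|_{C_0(\Omega)} + \gamma^{-1/2} h^{-N/2}$, producing the first two terms of the claimed bound.

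For the discretization gap I would mirror the architecture of the proof of Theorem~\ref{thm:errBound1}. Combining the continuous first-order condition \eqref{eq:regc} with its discrete analogue from Theorem~\ref{thm:discoptcond} with the test controls $z = \bar{z}_h^\gamma$, $z_h = \Pi_h \bar{z}^\gamma$, one reaches
\[
\alpha \|\bar{z}^\gamma - \bar{z}_h^\gamma\|^2_{L^2(\Omega)} \le (\bar{\xi}^\gamma - \bar{\xi}_h^\gamma,\, \bar{z}_h^\gamma - \bar{z}^\gamma)_{L^2(\Omega)} + (\bar{\xi}_h^\gamma,\, \Pi_h \bar{z}^\gamma - \bar{z}^\gamma)_{L^2(\Omega)}.
\]
Introducing the auxiliary adjoints $\hat{\xi}_h^\gamma := R_h f(\bar{u}^\gamma)$ and $\widetilde{\xi}_h^\gamma := R_h f(\boldsymbol{S}_h \bar{z}^\gamma)$, each pairing decomposes as in \eqref{eq:decomp1}--\eqref{eq:decomp2}. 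The crucial deviation from Theorem~\ref{thm:errBound1} is to further split $f(u) = (u - u_d) + (\hat{\mu} + \gamma u)_+$ so that the $\gamma$-dependence is trapped inside the positive-part term, and then to pair every positive-part factor against $R-R_h$ or $R_h$ via the $L^1 \to L^2$ duality
\[
|((R-R_h) w,\, v)_{L^2(\Omega)}| \le \|R-R_h\|_{L^1 \to L^2}\, \|w\|_{L^1(\Omega)}\, \|v\|_{L^2(\Omega)}.
\]
Lemma~\ref{lem:L1bound} then caps $\|(\hat{\mu} + \gamma u)_+\|_{L^1(\Omega)}$ by a $\gamma$-free constant, generating the terms $\|R-R_h\|_{L^1 \to L^2}$ and $\|(\hat{\mu}+\gamma \boldsymbol{S} \bar{z}^\gamma)_+ - (\hat{\mu}+\gamma \boldsymbol{S}_h \bar{z}^\gamma)_+\|_{L^1(\Omega)}$ of the claim; the smooth piece $u - u_d$ of $f$ yields the standard FEM term $h^\beta \|\bar{z}^\gamma\|_{L^2(\Omega)}$ via $\|(\boldsymbol{S} - \boldsymbol{S}_h)\bar{z}^\gamma\|_{L^2(\Omega)}$. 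The counterpart of term $(III)$ in Theorem~\ref{thm:errBound1} remains nonpositive by the same pointwise sign argument and is discarded.

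The pairing $(\bar{\xi}_h^\gamma, \Pi_h \bar{z}^\gamma - \bar{z}^\gamma)_{L^2(\Omega)}$ is handled by the same telescoping decomposition \eqref{eq:decomp2} with $\bar{z}_h^\gamma - \bar{z}^\gamma$ replaced by $\bar{z}^\gamma - \Pi_h \bar{z}^\gamma$ and with the residual $(\bar{\xi}^\gamma, \Pi_h\bar{z}^\gamma - \bar{z}^\gamma)_{L^2(\Omega)}$ treated via the $L^2$-orthogonality of $\Pi_h$ onto piecewise constants. This directly yields the linear $\|\bar{z}^\gamma - \Pi_h\bar{z}^\gamma\|_{L^2(\Omega)}$ term; the rooted contribution $\|\bar{z}^\gamma - \Pi_h\bar{z}^\gamma\|_{L^2(\Omega)}^{1/2}$ arises from a Young-type inequality applied to a cross term so as to absorb a multiple of $\|\bar{z}^\gamma - \bar{z}_h^\gamma\|^2_{L^2(\Omega)}$ into the left-hand side, the square root being unavoidable because the base estimate is quadratic in $\|\bar{z}^\gamma - \bar{z}_h^\gamma\|_{L^2(\Omega)}$.

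The principal obstacle is ensuring every constant is genuinely $\gamma$-independent. The naive $L^2 \to L^2$ estimate used in Theorem~\ref{thm:errBound1} forces the factor $1+\gamma$, because $\|(\hat{\mu}+\gamma \bar{u}^\gamma)_+\|_{L^2(\Omega)}$ can scale like $\sqrt{\gamma}$; Lemma~\ref{lem:L1bound} circumvents this, but only if every $\gamma$-dependent pairing is systematically routed through the $L^1 \to L^2$ topology. Reorganizing the proof of Theorem~\ref{thm:errBound1} in this spirit, and verifying that $R_h$ is uniformly bounded as an operator $L^1(\Omega)\to L^2(\Omega)$ (which follows from the same boundedness of $R$ together with the embedding $\widetilde{W}_0^{s,2}(\Omega) \hookrightarrow L^2(\Omega)$ guaranteed by $s > N/4$), is the technical crux of the argument.
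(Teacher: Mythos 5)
Your proposal is correct and follows essentially the same route as the paper: the split $\|\bar z-\bar z^\gamma\|+\|\bar z^\gamma-\bar z^\gamma_h\|$, Theorem~\ref{distance} together with Lemma~\ref{lem:uPlus} for the regularization part, and a rerun of the Theorem~\ref{thm:errBound1} decomposition in which every $\gamma$-dependent pairing is routed through $L^1\to L^2$ bounds and Lemma~\ref{lem:L1bound}, with the half-power projection term arising exactly as you describe. The only cosmetic difference is your justification of the uniform $L^1\to L^2$ boundedness of $R_h$ (the paper simply cites \cite[Theorem~3.6]{HAntil_DVerma_MWarma_2019b}, which is where $s>N/4$, i.e.\ $p=2$, enters), rather than deducing it from the boundedness of $R$.
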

\begin{proof}
This proof involves many of the same steps as the proof of Theorem \ref{thm:errBound1}.  As such, we will use the same notation and only highlight the differences needed for this result.  Recall the terms in the decomposition \eqref{eq:decomp1}.  We bound  $(III)$ exactly as before.  For $(I)$ recall  
\[
	(I) \leq C \|(R - R_h) f(\boldsymbol S \bar{z}^\gamma)\| \|\bar{z}_h^\gamma - \bar{z}^\gamma\|. 
\]
We will now consider the operators $R$ and $R_h$ acting on data in $L^1(\Omega)$, and note that the operators $R_h$ are uniformly bounded as operators from $L^1(\Omega)$ to $L^2(\Omega)$ (see \cite[Theorem 3.6]{HAntil_DVerma_MWarma_2019b}). 
Furthermore, from \cite[Proposition 3.8]{BoBoNoOtSa2018}, we have the estimate 
\begin{equation} \label{eq:hRate}
\|(R - R_h) v \|_{L^2(\Omega)} \leq C h^\beta \|v\|_{L^2(\Omega)} \qquad \forall v \in L^2(\Omega),
\end{equation}
where $\beta = \min\{2s, 1-\varepsilon\}$ so that $\|R-R_h\|_{L^2 \to L^2} \to 0$ as $h \to 0$.  Using the density of $L^2(\Omega)$ in $L^1(\Omega)$ and the fact that $\|f(\boldsymbol S\bar{z}^\gamma)\|_{L^1(\Omega)}$ is uniformly bounded (by \eqref{eq:rel} and Lemma \ref{lem:L1bound}), we have that $\|R-R_h\|_{L^1 \to L^2} \to 0$ as $h\to 0$, as a consequence of the Banach-Steinhaus Theorem (see \cite[Corollary 9.2]{Limaye1996}).  Therefore we have 
\[
\|(R - R_h) f(\boldsymbol S \bar{z}^\gamma)\|  \leq C\|R-R_h\|_{L^1 \to L^2} \|f(S\bar{z}^\gamma)\|_{L^1(\Omega)} \to 0,
\]
as $h\rightarrow 0$ and by appealing to Lemma \ref{lem:L1bound}, we have that this convergence is independent of $\gamma$.  Similarly in $(II)$, we can bound
\begin{alignat*}{4}
\|R_h(f(\bar{u}^\gamma) - f(\hat{u}_h^\gamma))\| \leq \hspace{-1pt}  C &\|(\boldsymbol{S} - \boldsymbol{S}_h) \bar{z}^\gamma \hspace{-1pt} + (\hat{\mu} + \gamma \boldsymbol S \bar{z}^\gamma)_+ \hspace{-2pt} - (\hat{\mu} + \gamma \boldsymbol S_h \bar{z}^\gamma)_+ \hspace{-1pt} \|_{L^1(\Omega)} \\
\leq C&\big(\|(\boldsymbol{S} - \boldsymbol{S}_h) \bar{z}^\gamma\|_{L^2(\Omega)}\\
& + \|(\hat{\mu} + \gamma \boldsymbol S \bar{z}^\gamma)_+ - (\hat{\mu} + \gamma \boldsymbol S_h \bar{z}^\gamma)_+ \|_{L^1(\Omega)} \big)\\
\leq  C &\big( h^\beta \|\bar{z}^\gamma\|_{L^2(\Omega)}\\
& + \|(\hat{\mu} + \gamma \boldsymbol S \bar{z}^\gamma)_+ - (\hat{\mu} + \gamma \boldsymbol S_h \bar{z}^\gamma)_+ \|_{L^1(\Omega)}\big),
\end{alignat*}
and the second term converges independent of $\gamma$ since $(\hat{\mu}+ \gamma \boldsymbol S_h\bar{z}^\gamma)_+ \to (\hat{\mu} +  \gamma \boldsymbol S \bar{z}^\gamma)_+$ pointwise almost everywhere in $\Omega$ and $(\hat{\mu} +  \gamma \boldsymbol S \bar{z}^\gamma)_+$ is bounded independent of $\gamma$ in $L^1(\Omega)$ by Lemma \ref{lem:L1bound}.  Now recall the decomposition in \eqref{eq:decomp2}.  We bound $(i)$ and $(ii)$ similarly to $(I)$ and $(II)$.  As with $(I)$ and $(II)$ above, we bound $(iii)$ as
\begin{alignat*}{4}
(iii) & = (R_h f(\hat{u}_h^\gamma) - R_h f (\bar{u}_h^\gamma),  \bar{z}^\gamma - \Pi_h \bar{z}^\gamma)\\
&  \leq C \|R_h(f(\hat{u}_h^\gamma) - f (\bar{u}_h^\gamma))\| \|\bar{z}^\gamma - \Pi_h \bar{z}^\gamma\|\\
&\leq C \|\boldsymbol{S}_h( \bar{z}^\gamma  - \bar{z}_h^\gamma) + (\hat{\mu} + \gamma \boldsymbol S_h \bar{z}^\gamma)_+ - (\hat{\mu} + \gamma \boldsymbol S_h \bar{z}_h^\gamma)_+\|_{L^1(\Omega)} \|\bar{z}^\gamma - \Pi_h \bar{z}^\gamma\|. 
\end{alignat*}
We now appeal to Lemma \ref{lem:L1bound} once again and the boundedness of $\boldsymbol S_h$ to obtain 
\begin{alignat*}{4}
(iii) \leq  C\Big( &\hat{C}\|\bar{z}^\gamma  -  \bar{z}_h^\gamma\|_{L^2(\Omega)} + \|(\hat{\mu} + \gamma \boldsymbol S_h \bar{z}^\gamma)_+\|_{L^1(\Omega)}\\
&+ \|(\hat{\mu} + \gamma \boldsymbol S_h \bar{z}^\gamma)_+\|_{L^1(\Omega)}\Big) \|\bar{z}^\gamma - \Pi_h \bar{z}^\gamma\|\\
\leq C \Big( &\hat{C}\|\bar{z}^\gamma  -  \bar{z}_h^\gamma\|_{L^2(\Omega)} + \widetilde{C}\Big) \|\bar{z}^\gamma - \Pi_h \bar{z}^\gamma\|.
\end{alignat*}
For $(iv)$ we obtain the simple bound 
\[
(iv)  \leq \|\Pi_h \bar{\xi}^\gamma  - \bar{\xi}^\gamma\| \|\bar{z}^\gamma - \Pi_h \bar{z}^\gamma\|.
\]
Using \eqref{regsolbound} and Lemma \ref{lem:L1bound} we can further bound
\begin{alignat*}{3}
\|\Pi_h \bar{\xi}^\gamma  - \bar{\xi}^\gamma\| \leq &C \|\bar{\xi}^\gamma\| \quad \Big(= \|Rf(\boldsymbol S\bar{z}^\gamma)\| \Big)\\
\leq &C \|\boldsymbol{S} \bar{z}^\gamma -  u_d + (\hat{\mu} + \gamma \boldsymbol S \bar{z}^\gamma)_+\|_{L^1(\Omega)}\\
\leq & C(C_{\bar{z}} + \widetilde{C}),
\end{alignat*}
and this is independent of $\gamma$.  Combining the preceding bounds, using Young's inequality as in the proof of Theorem \ref{thm:errBound1}, and making use of \eqref{eq:hRate} we can replace \eqref{eq:errBound} with  
\begin{alignat*}{5}
\|\bar{z}^\gamma - \bar{z}_h^\gamma\| \leq  C \Bigg(&\|R-R_h\|_{L^1 \to L^2}\|f(\boldsymbol S(\bar{z}^\gamma)\|_{L^1(\Omega)} + h^\beta\|\bar{z}^\gamma\|_{L^2(\Omega)}\\
& + \|(\hat{\mu} + \gamma \boldsymbol S \bar{z}^\gamma)_+ - (\hat{\mu} + \gamma \boldsymbol S_h \bar{z}^\gamma)_+ \|_{L^1(\Omega)}\\
& +  \|\bar{z}^\gamma - \Pi_h \bar{z}^\gamma\|_{L^2(\Omega)} +  \|\bar{z}^\gamma - \Pi_h \bar{z}^\gamma\|_{L^2(\Omega)}^{1/2}\Bigg), 
\end{alignat*}
from which the result follows.
\end{proof}


\section{Numerical Results}
\label{s:numerics}

In this section we present some numerical results to reinforce the efficacy of our approach.  
We set $\Omega \subset \mathbb R^2$ to be a disk of radius $1/2$ centered at the origin.  
We truncate $\mathbb R^2\backslash \Omega$ and bound the exterior of $\Omega$ with a circle of radius $3/2$ centered at the origin.  We use standard $\mathcal P_1$ Lagrangian finite elements for our spatial discretization of the state and adjoint and a sequence of 8 increasingly finer meshes.  We will refer to the meshes by number with mesh 1 being the coarsest and mesh 8 being the finest.  For details on the discretization of the fractional Laplacian operator see \cite{acosta2017short}.  To minimize the objective function, we use the BFGS method. 

In all of the experiments we define our desired state to be 
\[
u_d (x,y) = \frac{2^{-2s}}{\Gamma(1+s)^2}(1/4-(x^2 + y^2)_+)^s.
\]
For $s = 0.2$, the desired state is shown in the top left panel in Figure \ref{fig:3}.  We take $u_b = 0.1$ as our state constraint.  In the left panel in Figure \ref{fig:2} we show the convergence of $\|(u-u_b)_+\|_{L^2(\Omega)}$ for $s = 0.4$ on the finest mesh as $\gamma$ increases.  We note that since we are taking our desired state to be non-negative we get full $\mathcal O(\gamma^{-1})$ convergence as stated in Theorem \ref{ydorder}.  We also show the maximum error in  $u - u_b$ with respect to $\gamma$ on mesh 5 in the right panel of Figure \ref{fig:2}.

\begin{figure}[ht]
\centering
\includegraphics[width=0.45\textwidth]{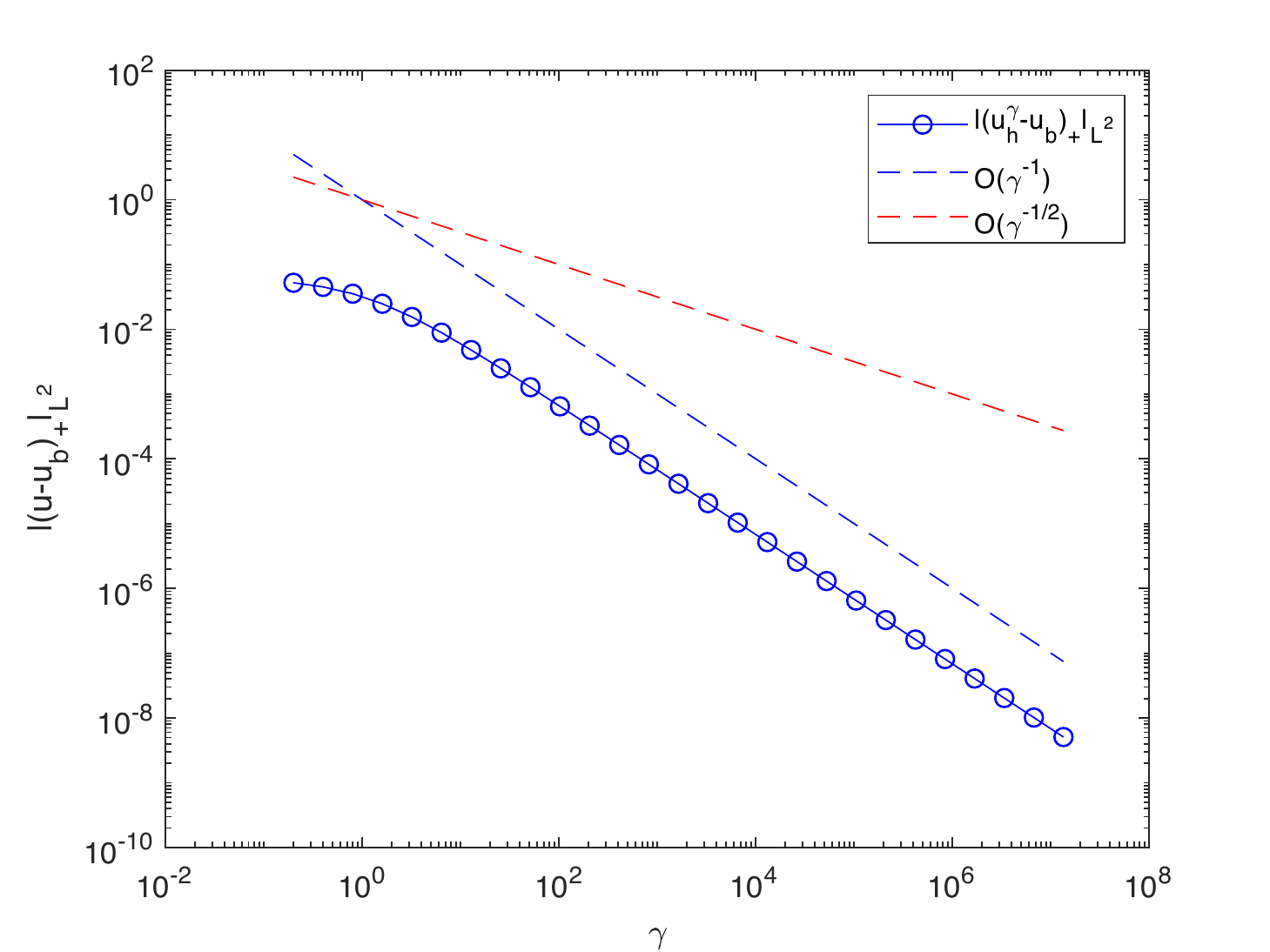} \quad
\includegraphics[width=0.45\textwidth]{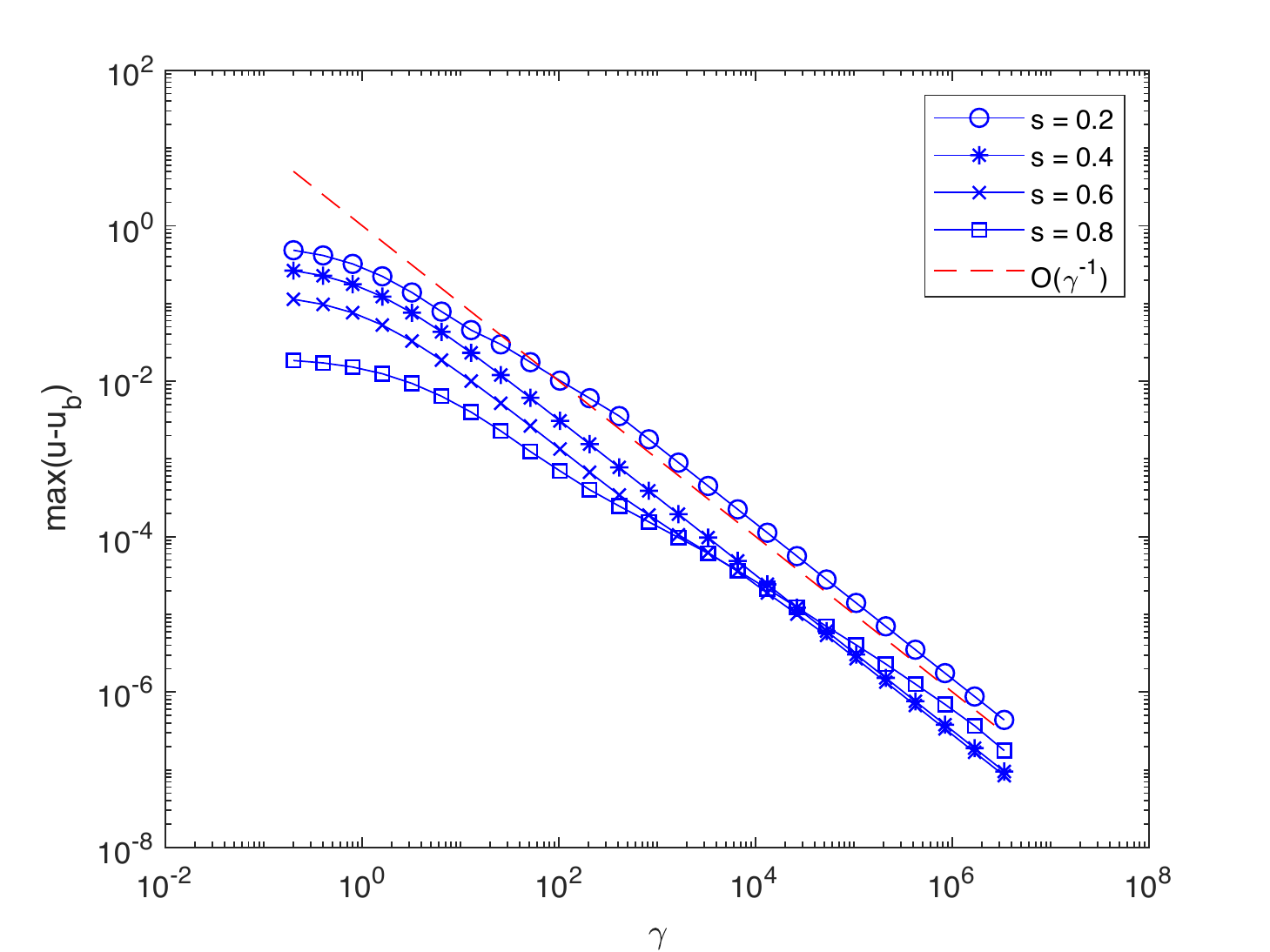}
\caption{Convergence of $\|(u-u_b)_+\|_{L^2(\Omega)}$ (left) and the violation of the state constraints in the max-norm ($\max(u - u_b)$) for given fractional exponent $s$ (right) as $\gamma$ increases.}\label{fig:2}
\end{figure}

In Figure \ref{fig:2.5} we show some convergence results for $s = 0.2$ for the control and the state as $\gamma$ increases.  Since we do not have an exact solution for the control problem and the associated optimal state, obtaining error estimates for the optimal control and state is problematic.  To obtain the plots in Figure \ref{fig:2.5} we solve the optimal control problem on the finest mesh (mesh 8), then project this solution to the coarse meshes.  We then take the $L^2$ error between solutions on the four coarsest meshes and the projected solution.

\begin{figure}[ht]
\centering
\includegraphics[width =\textwidth]{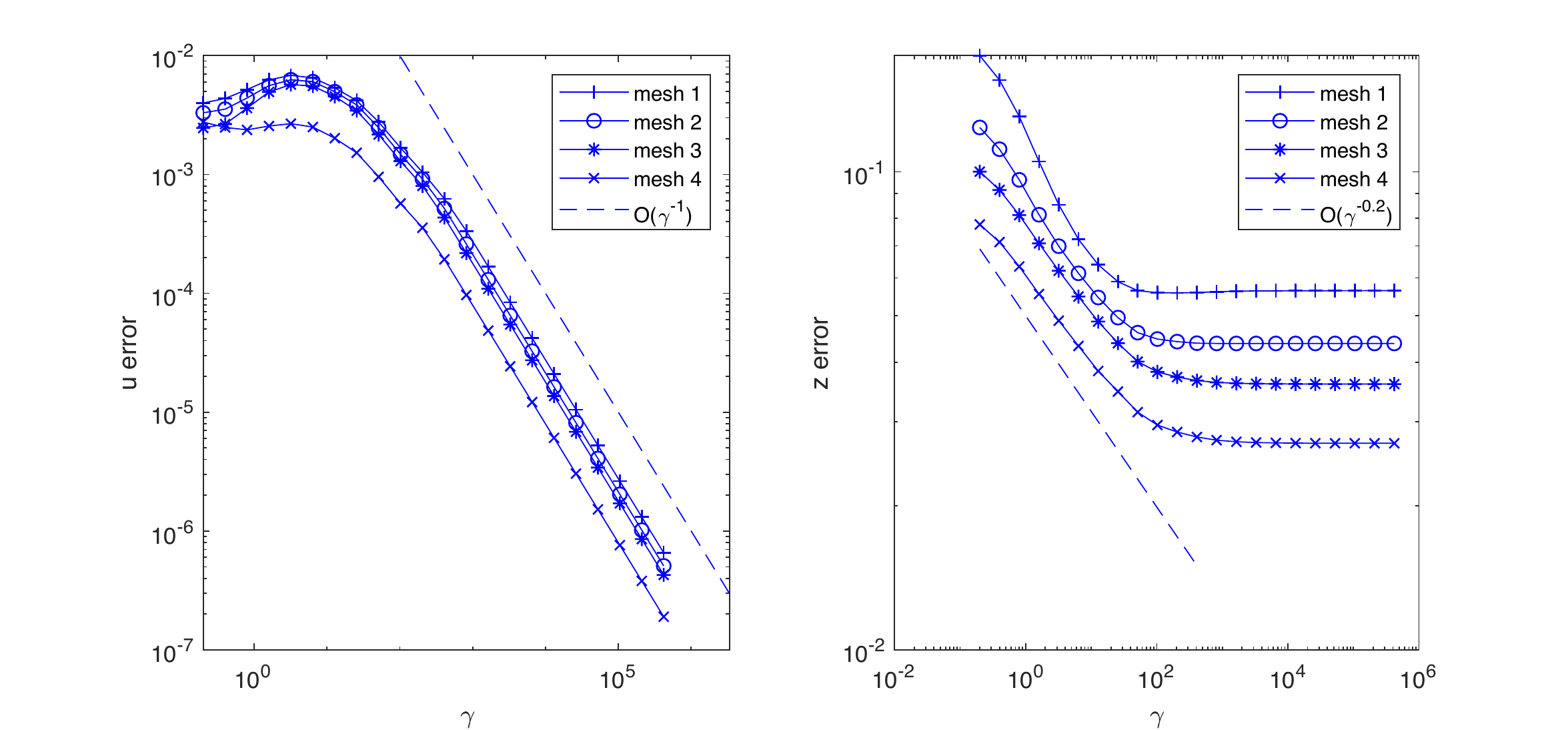}
\caption{
The error $\|u - u^\gamma_h\|_{L^2(\Omega)}$ (left) and $\|z - z^\gamma_h\|_{L^2(\Omega)}$ (right)
with respect to $\gamma$ for various values of $h$ (different meshes). We recall that $(u,z)$ are computed by 
solving the optimal control problem on mesh 8.
} \label{fig:2.5}
\end{figure}

In Figure \ref{fig:3} we show the optimal state, control, and Lagrange multiplier for $s = 0.2$ and $\gamma = 419430.4$.  We note that the control is a piecewise constant on the mesh.  The optimal state in Figure \ref{fig:3} appears to be cleanly cut off at $u_b = 0.1$, complying with the state constraints and resulting in a cylindrical profile. Moreover, notice that the Lagrange multiplier corresponding to the inequality constraints is a measure (bottom right panel) as expected.  

\begin{figure}[ht]
\centering
\includegraphics[width=0.4 \textwidth]{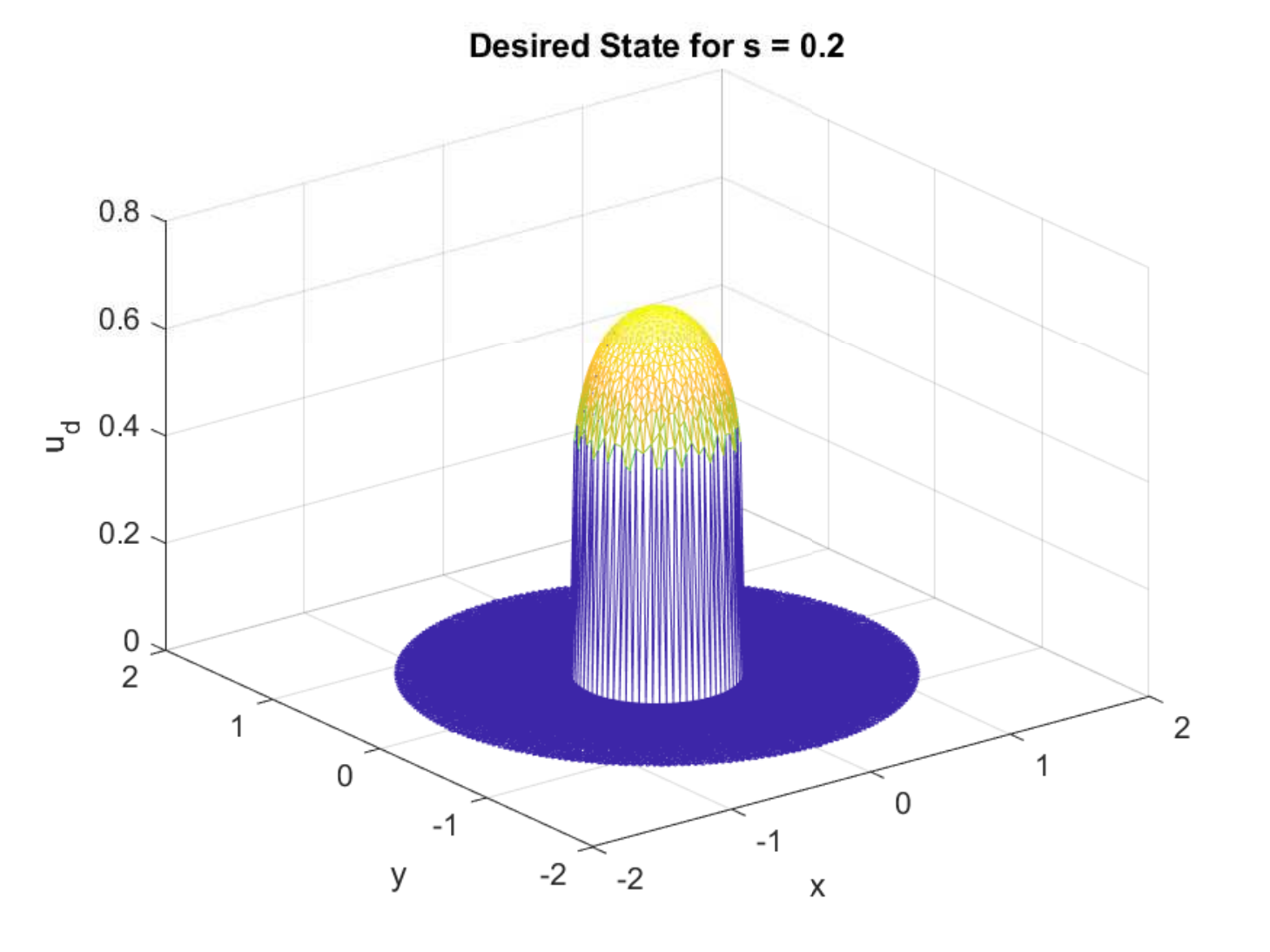} \qquad 
\includegraphics[width=0.4\textwidth]{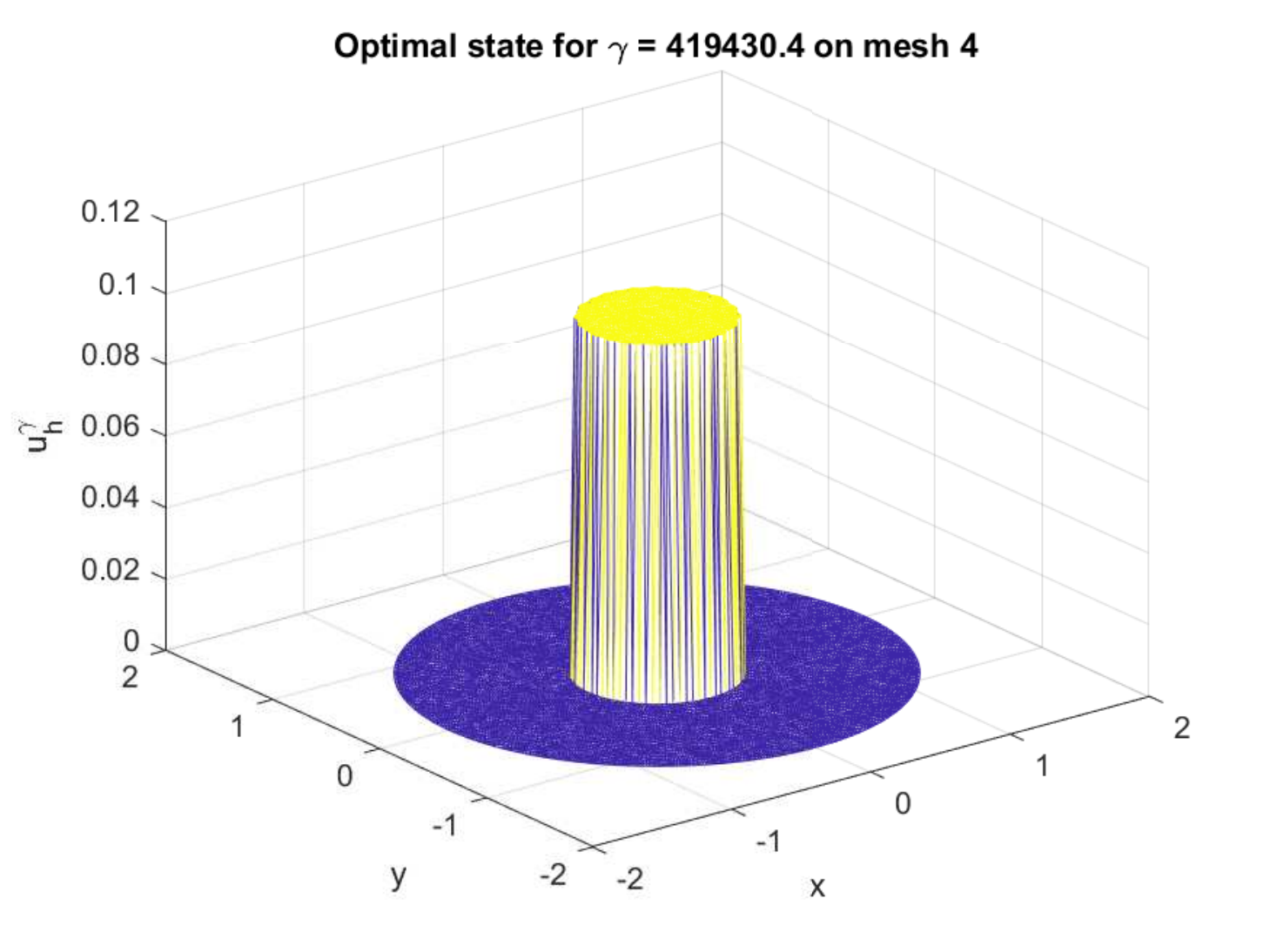}\\
\includegraphics[width=0.4\textwidth]{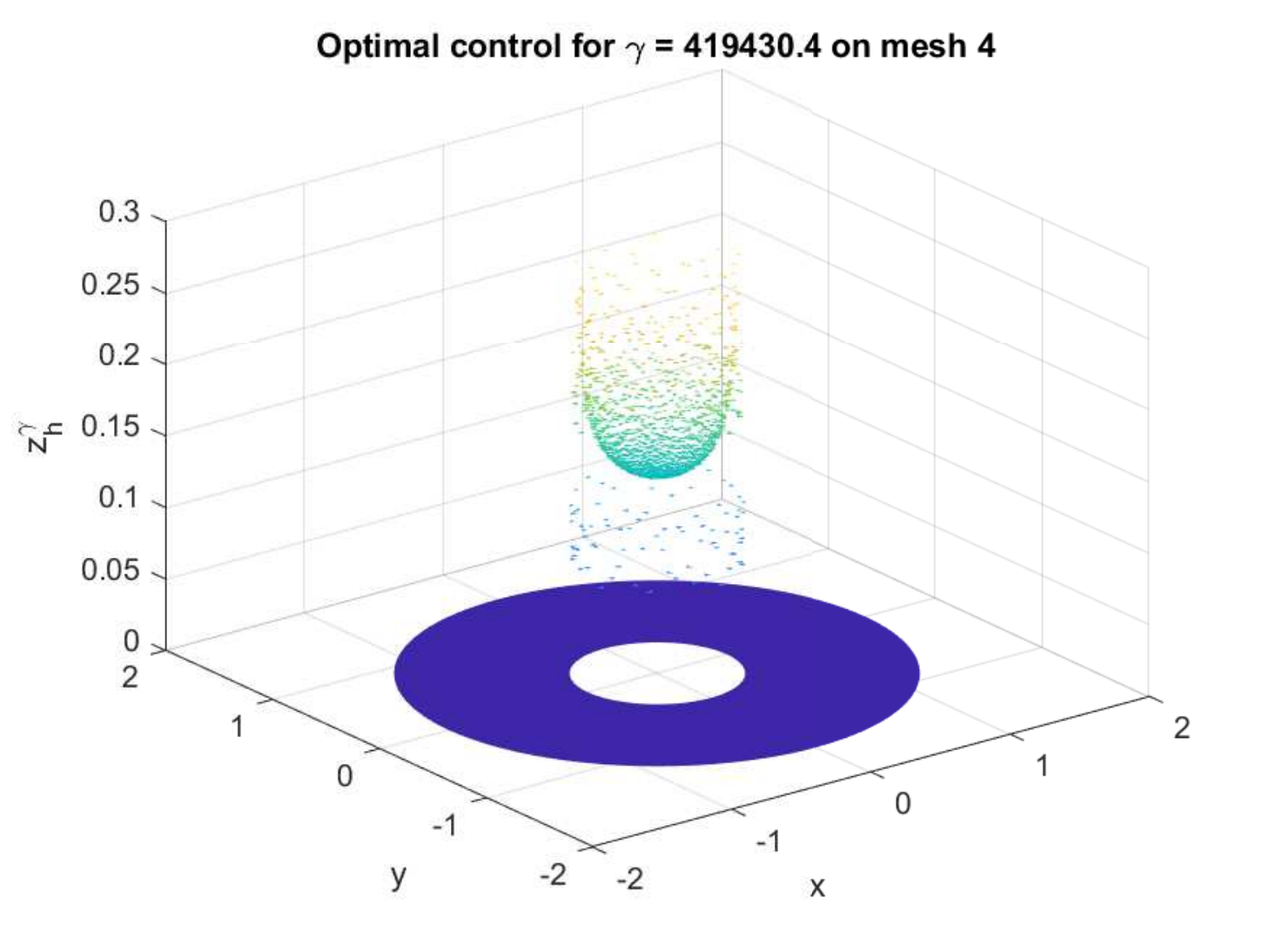} \qquad
\includegraphics[width=0.4\textwidth]{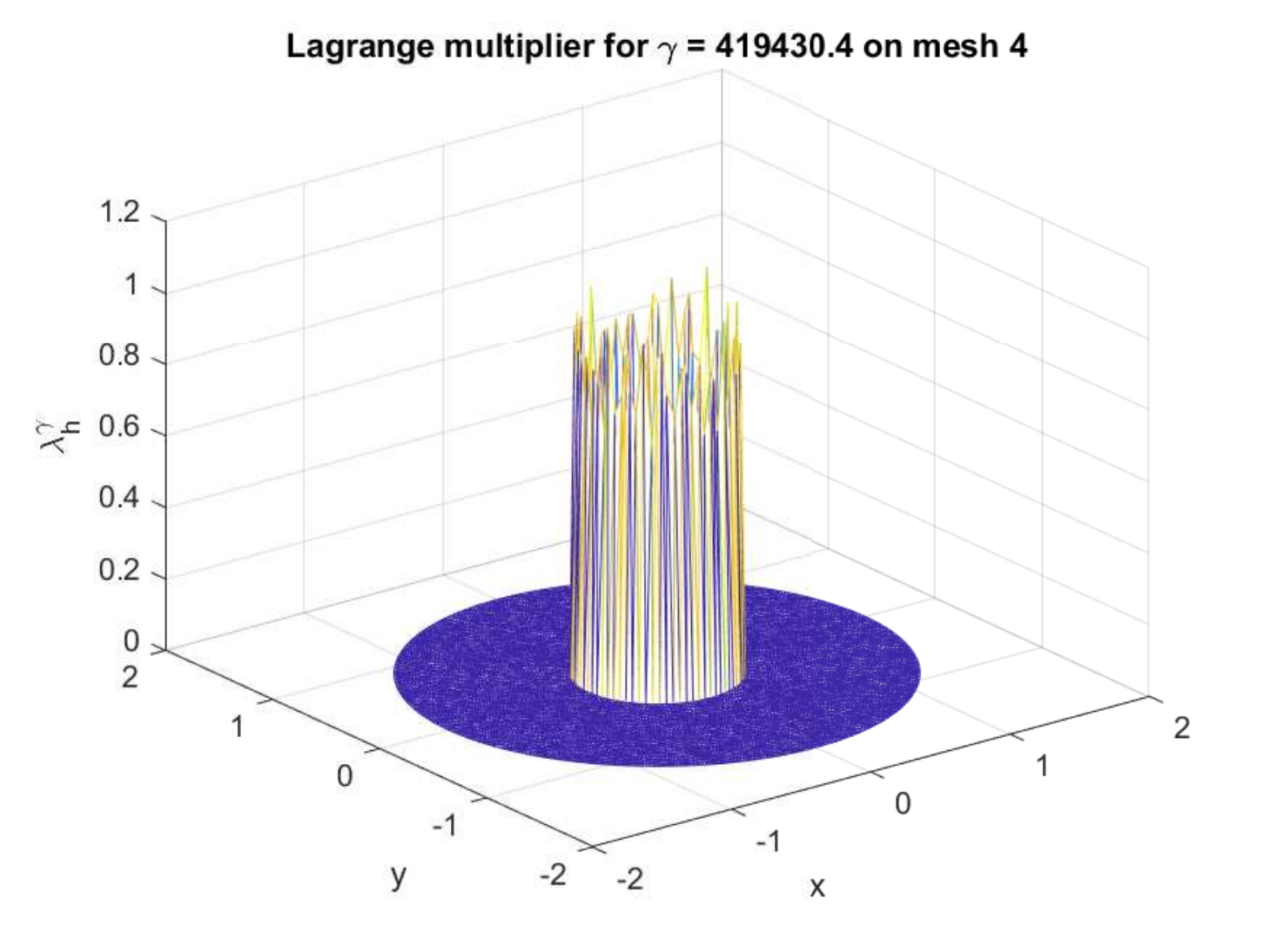}
\caption{The desired state (top left) optimal state (top right),  control (bottom left) and Lagrange multiplier (bottom right) for $s = 0.2$ and $\gamma = 419430.4$.}\label{fig:3}
\end{figure}

\section{Conclusions}

We have presented a Moreau-Yosida regularized version of the optimal control problem considered in \cite{HAntil_DVerma_MWarma_2019b}. While this regularization has been used and studied in other contexts, this is the first work to do so for optimal control problems constrained by fractional PDE with state constraints.  We have also provided a convergence analysis for the solution of the regularized problem to the solution of the original problem as well as analysis of the solution of a discretized version of the problem to the original solution.  The discretized version of the problem was implemented using a finite element method.  Our numerical experiments validate the theoretical findings and show that the Moreau-Yosida regularization is an effective method for solving problems involving the optimal control of fractional elliptic PDE with state constraints.  

We have established the convergence of the discrete approximation of the optimal control to original control.  Furthermore, for a fixed $\gamma$ we have established a convergence rate in $h$. In addition, for a range of $s$ values, we have shown the convergence of our error bound independent of $\gamma$.


\bibliographystyle{plain}
\bibliography{refs}

\end{document}